\newtheorem{thm}{Theorem}[section]
\newtheorem{prop}[thm]{Proposition}
\newtheorem{lem}[thm]{Lemma}
\theoremstyle{remark}
\newtheorem{rmk}{Remark}
\newcommand{\tam}[1]{$\textsc{Tam}({#1})$}
\newcommand{\tamref}{\cite{PRV2014extension}}
\author{Wenjie Fang \thanks{Email: wenjie.fang@liafa.univ-paris-diderot.fr. W.F. is partially supported by \textit{Agence nationale de la Recherche} under grant number ANR 12-JS02-001-01 ``Cartaplus'', and by the City of Paris under the grant ``\'Emergence Paris 2013, Combinatoire \`a Paris''. W.F. acknowledges hospitality from LaBRI, UMR CNRS 5800, Universit\'e de Bordeaux, France.} \\ IRIF \\ Universit\'e Paris Diderot \and Louis-Fran\c cois Pr\'eville-Ratelle \thanks{Email: preville-ratelle@inst-mat.utalca.cl. L.-F. P.-R. is supported by the government of Chile under Proyecto Fondecyt 3140298.} \\ Instituto de Mathem\'atica y F\'isica \\ Universidad de Talca}
\title{The enumeration of generalized Tamari intervals \footnote{An extended abstract of this paper was accepted by the conference Formal Power Series and Algebraic Combinatorics 2016 (FPSAC 2016), with the title ``From generalized Tamari intervals to non-separable planar maps (extended abstract)''.}}
\begin{document}
\maketitle
\begin{abstract} 
Let $v$ be a grid path made of north and east steps. The lattice \tam{v}, based on all grid paths weakly above $v$ and sharing the same endpoints as $v$, was introduced by Pr\'eville-Ratelle and Viennot (2014) and corresponds to the usual Tamari lattice in the case $v=(NE)^n$. Our main contribution is that the enumeration of intervals in \tam{v}, over all $v$ of length $n$, is given by $\frac{2 (3n+3)!}{(n+2)! (2n+3)!}$. This formula was first obtained by Tutte(1963) for the enumeration of non-separable planar maps. Moreover, we give an explicit bijection from these intervals in \tam{v} to non-separable planar maps.
\end{abstract}

\section{Background and main results} 

The well-known usual Tamari lattice can be defined on Dyck paths or some other combinatorial structures counted by Catalan numbers such as binary trees, and it has many connections with several fields, in particular in algebraic and enumerative combinatorics. In \cite{ch06}, Chapoton showed that the intervals in the Tamari lattice are enumerated by the formula 
\[ \frac{2}{n(n+1)} \binom{4n+1}{n-1}. \]
This formula also counts rooted planar 3-connected triangulations. Many results and conjectures about the diagonal coinvariant spaces of the symmetric group (we refer to the books \cite{Bergeron-book-algcomb,haglund-book} for further explanation), also called the Garsia-Haiman spaces, led Bergeron to introduce the $m$-Tamari lattice for any integer $m \geq 1$. The case $m=1$ is the usual Tamari lattice. It was conjectured in \cite{bergeron-preville} and proved in \cite{bousquet-fusy-preville} and \cite{BMCPR2013representation} that the number of intervals and labeled intervals in the $m$-Tamari lattice of size $n$ are given respectively by the formulas

\[ \frac{m+1}{n (mn+1)} \binom{(m+1)^2 n + m}{n-1} \, \, \, {\rm and} \, \, \, (m+1)^n (mn+1)^{n-2}.\]

These labeled intervals (resp. unlabeled intervals) are conjectured to be enumerated by the same formulas as the dimensions (resp. dimensions of the alternating component) of the trivariate Garsia-Haiman spaces. These connections motivated the introduction of the lattice \tam{v} in \tamref{} for an arbitrary grid path $v$ as a further generalization. In particular, the Tamari lattice of size $n$ is given by \tam{(NE)^n}, and more generally the $m$-Tamari lattice by \tam{(NE^m)^n}. A precise definition of \tam{v} will be given in Section~\ref{sec:def-interval}.

The Tamari lattice and its generalizations, while being deeply rooted in algebra, have mysterious enumerative aspects and bijective links yet to be unearthed. For instance, intervals in the Tamari lattice are equi-enumerated with planar triangulations, and a bijection was given by Bernardi and Bonichon in \cite{BB2009intervals}. Similarly, the numbers of intervals and labeled intervals in the $m$-Tamari lattice in \cite{bousquet-fusy-preville} and \cite{BMCPR2013representation} are also given by simple planar-map-like formulas, where a combinatorial explanation is still missing. In this context, similar to the bijection in \cite{BB2009intervals}, we also discover a bijection between intervals and maps, contributing to the combinatorial understanding of the Tamari lattice. We should mention that there are also bijective links between Tamari intervals, interval posets and tree flows \cite{chapoton-chatel-pons}.

In this article, we give an explicit bijection between intervals in \tam{v} and non-separable planar maps, from which we obtain the enumeration formula of these intervals. To describe it, we need two intermediate structures: one called \emph{synchronized interval}, which is a special kind of intervals in the usual Tamari lattice; the other called \emph{decorated tree}, basically a kind of rooted trees with labels on their leaves that satisfy certain conditions. The bijection from generalized Tamari intervals to synchronized intervals is implicitly given in \tamref{}. We then show that an exploration process gives a bijection between non-separable planar maps and decorated trees, and we present another bijection between decorated trees and synchronized intervals.

Tutte showed in \cite{tutte1963census} that non-separable planar maps with $n+2$ edges are counted by $\frac{2 (3n+3)!}{(n+2)! (2n+3)!}$. Therefore, we obtain as a consequence of our bijection the following enumeration formula of intervals in \tam{v}.
\begin{thm}\label{thm:enum}
The total number of intervals in \tam{v} over all possible $v$ of length $n$ is given by
\begin{equation} \label{eq:cnt} \sum_{v \in (N,E)^n} \mathrm{Int}(\textsc{Tam}(v)) = \frac{2 (3n+3)!}{(n+2)! (2n+3)!}. \end{equation}
\end{thm}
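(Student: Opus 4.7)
The plan is to prove the theorem not by direct enumeration but by exhibiting a bijection between the left-hand side (intervals in \tam{v} summed over $v$ of length $n$) and non-separable planar maps with $n+2$ edges, whose count is given by Tutte's formula. The bijection will be built as a composition of three bijections via two intermediate combinatorial families: \emph{synchronized intervals}, which are a distinguished subclass of intervals in the usual Tamari lattice of Dyck paths, and \emph{decorated trees}, certain rooted plane trees with labeled leaves.

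First I would describe the reduction from generalized Tamari intervals to synchronized intervals. Given an interval $(P_1, P_2)$ in \tam{v}, both paths share the endpoints of $v$ and stay weakly above $v$; one may canonically re-encode such a pair by a pair of Dyck paths together with the auxiliary information of where the horizontal steps of $v$ sit, and the conditions ``weakly above $v$'' translate into a local synchronization condition between the two Dyck paths at each east step. This is the content of the correspondence already implicit in \tamref{}, and I would just formalize it carefully, checking that summing over $v$ corresponds exactly to removing the ``where is $v$'' data, so that the output is a synchronized interval with no further marking. This step is essentially bookkeeping.

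Next I would construct the two genuinely new bijections. On the map side, I would define an exploration process (a depth-first / canonical spanning walk around a non-separable planar map with $n+2$ edges), reading the edges off the outer boundary and descending into each 2-connected block, and show that the recorded sequence of branching and labels encodes precisely a decorated tree of the appropriate size; the non-separability of the map should correspond cleanly to a ``no trivial block'' condition on the labels. On the Tamari side, I would give a direct bijection between decorated trees and synchronized intervals, most naturally by reading a synchronized interval $(P_1, P_2)$ as a tree whose shape is determined by $P_2$ and whose leaf labels record the synchronization data forced by $P_1$. Composing the three bijections and appealing to Tutte's formula \cite{tutte1963census} yields Theorem~\ref{thm:enum}.

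The main obstacle will be the middle step, namely making the exploration of a non-separable planar map produce a decorated tree in a way that is provably reversible. The subtlety is that non-separability is a global property, so the local rules of the exploration must encode it via constraints on the leaf labels that are strong enough to rule out separable maps yet loose enough to still be matched by the Tamari-side count; verifying that the rules for re-assembling a map from a decorated tree always yield a non-separable map, and never collapse two distinct maps to the same tree, is where the real combinatorial work lies. Once this is in place, the Tamari-side bijection and the synchronized-interval reduction are comparatively routine, and the formula follows automatically.
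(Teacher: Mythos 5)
Your proposal follows exactly the same chain of bijections as the paper: generalized Tamari intervals $\to$ synchronized intervals (bookkeeping from \tamref{}), then a depth-first exploration turning non-separable planar maps into decorated trees with back-edge depths as leaf labels, then a bijection in which the tree shape comes from the upper Dyck path and the leaf labels are read off the lower path, finishing with Tutte's count. One small imprecision: ``descending into each 2-connected block'' is misleading since a non-separable map is a single block; what the exploration actually produces is a DFS spanning tree, with the label constraints (the paper's conditions 1--3 on decorated trees) encoding planarity and non-separability, and that is indeed where the real work lies, just as you identify.
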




\section{From canopy intervals to synchronized intervals} \label{sec:def-interval}

A \emph{grid path} is a (finite) walk on the square grid, starting at (0,0), consisting of north and east unit steps denoted by $N$ and $E$ respectively. The size of a grid path is the number of steps it contains. For $v$ an arbitrary grid path, let \tam{v} be the set of grid paths that are weakly above $v$ and share the same endpoints as $v$. The covering relation defined as follows gives \tam{v} a lattice structure. For $v_1$ a grid path in \tam{v} and $p$ a grid point on $v_1$, we define the horizontal distance horiz$_v(p)$ to be the maximum number of east steps that we can take starting from $p$ without crossing $v$. The left part of Figure~\ref{fig:tam-def} gives an example of a path in \tam{v}, with the horizontal distance of each lattice point on $v_1$. Suppose that $p$ is preceded by a step $E$ and followed by a step $N$ in $v_1$. Let $p'$ be the first lattice point in $v_1$ after $p$ such that horiz$_v(p')$ $=$ horiz$_v(p)$, and $v_1 [p,p']$ the sub-path of $v_1$ from $p$ to $p'$. By switching the step $E$ just before $p$ and the sub-path $v_1 [p,p']$ in $v_1$, we obtain another path $v_1'$ in \tam{v}. The covering relation $\prec_v$ in \tam{v} is given by $v_1 \prec_v v_1'$, and the lattice structure of \tam{v} is given by the transitive closure $\leq_v$ of $\prec_v$. The right part of Figure~\ref{fig:tam-def} gives an example of $\prec_v$. The path $v$ is called the \emph{canopy} due to what it represents on binary trees, and it is also the minimal element in \tam{v}.

\begin{figure}[!htbp]
\centering
\includegraphics[page=9]{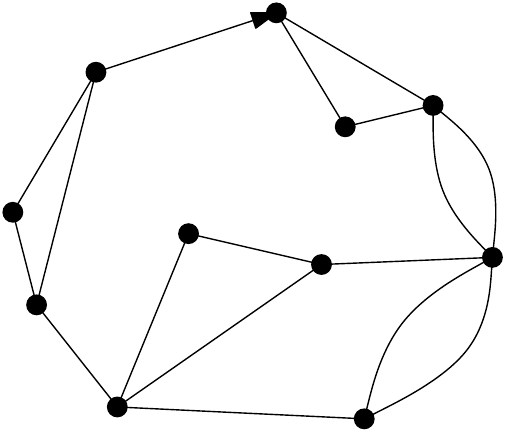}
\caption{Horizontal distance and the covering relation in \tam{v}} \label{fig:tam-def}
\end{figure}

A \emph{canopy interval} of size $n \geq 0$ is given by a triple of paths $(v_2,v_1,v)$, all of size $n$, such that $v_2 \geq v_1$ in \tam{v}. They are in bijection with a family of intervals in the usual Tamari lattice called \emph{synchronized intervals}, which are better suited for establishing the bijections in the next sections. We refer readers to \tamref{} for more details about the lattice \tam{v} and canopy intervals.

We now need the concept of Dyck paths. A \emph{Dyck path} of size $n$ is a finite walk on $\mathbb{Z}^2$, starting at (0,0), consisting of $n$ \emph{up steps} $u=(1,1)$ and $n$ \emph{down steps} $d=(1,-1)$, and never crossing the $x$-axis. We choose to use diagonal steps $(u,d)$ for Dyck paths instead of north and east steps $(N,E)$ of grid paths to underline that these two types of paths are elements in \textbf{different} lattices. It is known (\textit{cf.} \cite{XV_DeVi84, levine}) that pairs of non-crossing paths with the same endpoints of size $n-1 \geq 0$ are counted by the Catalan number $C_n = \frac{1}{n+1}\binom{2n}{n}$, which is also the number of Dyck paths of size $n$. 

For a Dyck path $P = (p_i)_{1 \leq i \leq 2n}$ where $p_i$ are steps, let $i_1, \ldots, i_n$ be the indices such that $p_{i_k} = u$. We define $\mathit{Type}(P)$ as the following word $w$ of length $n-1$: for $k \leq n-1$, if $p_{i_k} = p_{i_k+1} = u$, then $w_k = E$, otherwise $w_k = N$. Let $I(v)$ be the set of Dyck paths of type $v$. It is also an interval in the usual Tamari lattice. Types of Dyck paths are related to \tam{v} by the following theorem.

\begin{thm}[Theorem~3 in \tamref{}]\label{thm:tam-part}
The usual Tamari lattice is partitioned into intervals $I(v) \simeq \textsc{Tam}(v)$ of Dyck paths of all $2^{n-1}$ possible types $v$.
\end{thm}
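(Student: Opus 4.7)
My plan is to prove the theorem in three stages: partition, interval structure, and lattice isomorphism. First, $\mathit{Type}$ is a well-defined map from Dyck paths of size $n$ to $\{N,E\}^{n-1}$, and every word is attained (e.g.\ by the Dyck path obtained by placing a single $d$ after each $u$ where $v$ demands $N$ and padding with $d$'s at the end). Hence $\mathit{Type}$ partitions the Tamari lattice into the $2^{n-1}$ nonempty fibres $I(v)$, and the content of the theorem lies in showing that each fibre is an interval and that this interval is isomorphic to $\textsc{Tam}(v)$.

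Second, each $I(v)$ is a Tamari interval. The key observation is that a Tamari cover $P \lessdot P'$ either preserves $\mathit{Type}$ or replaces exactly one letter $N$ by $E$, strictly increasing the $E$-count. Indeed, the rotation $d\,(\text{primitive Dyck subpath}) \to (\text{primitive Dyck subpath})\,d$ only alters the step immediately following at most one up step: the up step, if any, directly preceding the rotated $d$. When such an up step exists, its follower changes from $d$ to the first step of the primitive Dyck subpath, which is $u$; its letter in $\mathit{Type}$ thus switches from $N$ to $E$. The $E$-count of $\mathit{Type}$ is therefore non-decreasing along saturated Tamari chains and strictly increases at any type-changing cover. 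So if $P \leq Q \leq R$ with $\mathit{Type}(P) = \mathit{Type}(R) = v$, the $E$-counts at $P$ and $R$ agree, forcing every intermediate type to equal $v$; hence $I(v)$ is order-convex, with minimum the Dyck path front-loading its descents (descending to the $x$-axis as early as the type allows) and maximum the path delaying descents as long as possible.

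Third, I would construct an explicit bijection $\phi : I(v) \to \textsc{Tam}(v)$, for instance via the classical encoding of Dyck paths of size $n$ as pairs of non-crossing grid paths of length $n-1$ sharing endpoints, arranged so that the lower path of the pair is exactly the canopy $\mathit{Type}(P)$ and the upper path is the grid path $\phi(P) \in \textsc{Tam}(v)$. Concretely, writing $P = u\,D_1\,u\,D_2\,\cdots\,u\,D_n$ with $D_k$ a block of down steps and $|D_k| \geq 1$ iff $v_k = N$, the tuple $(|D_1|,\ldots,|D_n|)$ (or equivalently the sequence of heights reached by $P$ at each of its peaks) encodes a grid path above $v$, and a dimension count yields bijectivity. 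The principal difficulty is verifying that $\phi$ intertwines the two covering relations: the Tamari cover is a nonlocal $du$-rotation involving a primitive Dyck subpath, while the $\textsc{Tam}(v)$ cover is a nonlocal $E$-swap governed by the horizontal-distance function $\mathrm{horiz}_v$. I would handle this by a direct case analysis on the position of the rotation, showing that a type-preserving Tamari rotation pulls back through $\phi$ to the $E$-swap of precisely the matching horizontal distance. Once order-preservation is verified, $\phi$ is a lattice isomorphism and the theorem follows.
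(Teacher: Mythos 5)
The paper itself does not prove this statement: it cites it as Theorem~3 of \tamref{}, and the only supporting discussion in the present text is the brief sketch (after Figure~\ref{fig:tam-part}) of the bijection between Dyck paths and pairs of non-crossing grid paths that underlies the isomorphism $I(v)\simeq\textsc{Tam}(v)$. So you are attempting to supply a proof the paper outsources, and I can only assess your proposal on its own terms.

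Your first stage is fine, and your analysis of how a Tamari rotation affects $\mathit{Type}$ is correct: at most one letter flips, always from $N$ to $E$, namely when the valley's $d$ is preceded by an up step; this gives order-convexity of each fibre $I(v)$. But there is a genuine gap in passing from ``order-convex'' to ``interval'': a nonempty order-convex subset of a lattice need not be an interval (think of two incomparable elements with a common lower cover). You must exhibit a unique minimum $m_v$ and a unique maximum $M_v$ of $I(v)$ and verify $m_v\le P\le M_v$ for every $P\in I(v)$, whereas you only assert that the ``front-loading'' and ``delaying'' paths do the job. That verification is a real step, most cleanly done via the distance-function criterion of Lemma~\ref{ericmirlouis} by checking $D_{m_v}(i)\le D_P(i)\le D_{M_v}(i)$ for all $i$ using only the assumption that $P$ has type $v$, and it is not carried out. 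Your third stage is deferred entirely to ``a direct case analysis'' that you ``would handle'': verifying that the encoding intertwines the non-local Dyck rotation with the non-local horizontal-distance-driven swap in $\textsc{Tam}(v)$ is precisely the technical heart of the cited theorem, and it is missing. As written, the proposal is a plausible outline rather than a proof.
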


\begin{figure}[!htbp]
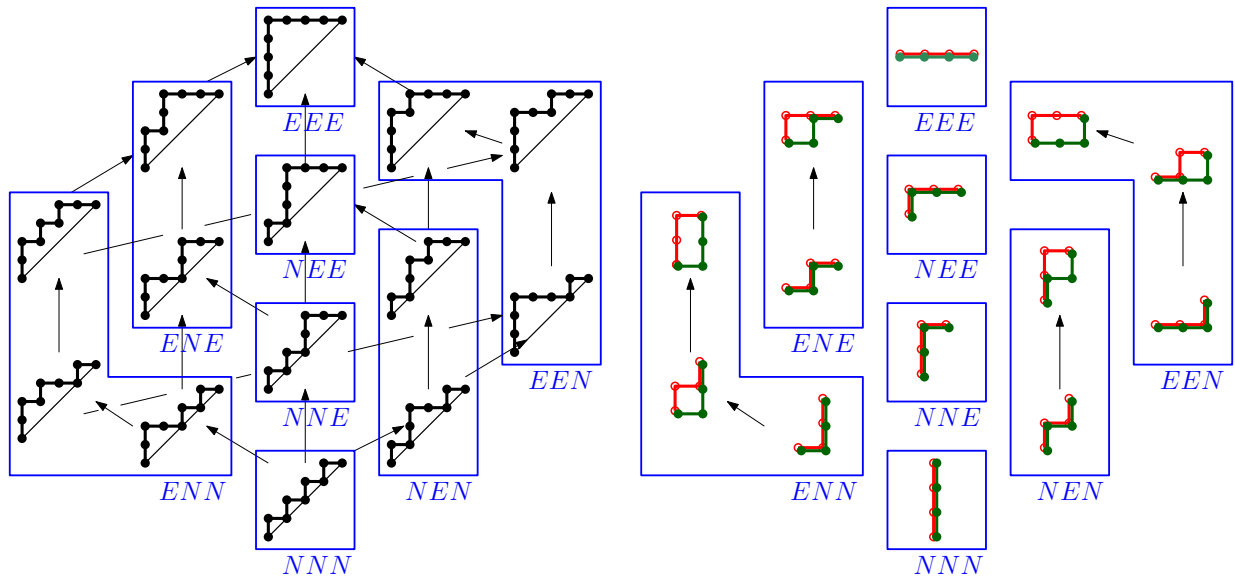

\centering
\includegraphics[page=14, scale=0.58]{figures.pdf} \quad \includegraphics[page=15, scale=0.58]{figures.pdf}
\caption{Partition of the usual Tamari lattice of size $4$ into Dyck paths of the same type, and the corresponding generalized Tamari intervals} \label{fig:tam-part}
\end{figure}

An example of this partition by type can be seen in Figure~\ref{fig:tam-part}. A \emph{synchronized interval} is an interval inside an $I(v)$ for some grid path $v$, an interval in the usual Tamari lattice made of two Dyck paths of the same type $v$. We denote by $\mathcal{I}_n$ the set of synchronized intervals made of Dyck paths of length $n$. For any given path $v$, the explicit bijection presented in \tamref{} between pairs of non-crossing grid paths of size $n-1$ and complete binary trees with $n$ interior vertices, the latter equivalent to Dyck paths of size $n$. This bijection specializes to give a lattice isomorphism between \tam{v} and the (sub-)interval of the Tamari lattice $I(v)$.

There is a natural matching between up steps and down steps in a Dyck path defined as follows: let $u_i$ be an up step of a Dyck path $P$, we draw a horizontal ray from the middle of $u_i$ to the right until it meets a down step $d_j$, and we say that $u_i$ is \emph{matched} with $d_j$. We denote by $\ell_P(u_i)$ the distance from $u_i$ to $d_j$ in $P$ considered as a word, which is defined as the number of letters between $u_i$ and $d_j$ plus 1. For example, in $P=uududd$, we have $\ell_P(u_1)=5$, since its matching letter $d$ is the one at the end. We define the \emph{distance function} $D_P$ by $D_P(i) = \ell_P(u_i)$, where $u_i$ is the $i^{\rm th}$ up step in $P$. Let $u_1$ be an up step in $P$ with its match $d_1$, and similarly $u_2$ be another up step in $P$ with its match $d_2$. Suppose that $u_1$ precedes $u_2$ in $P$. We will say $u_1$ \emph{contains} $u_2$ if $u_2$ is contained in the subpath of $P$ with both extremities $u_1$ and $d_1$.

We now describe the bijection between Dyck paths and pairs of non-crossing grid path. An example is given in Figure~\ref{fig:dyck-gen-bij}. As before, let $P$ be a Dyck path of size $n$. Let $v$ be the grid path $v=\mathit{Type}(P)$, which is of size $n-1$. We use the same terminology as in the definition of $\mathit{Type}(P)$. Let $\{i_{r_1},i_{r_2},...,i_{r_{s-1}}\}$ be the subset of  $\{ i_1,...,i_{n-1} \}$ such that $w_{r_1}=...=w_{r_{s-1}}=N$, and let $i_{r_s}=i_n$. For $1\leq k \leq s-1$, let $c_k$ be the number of up steps in $P$ that contain both the ${r_{k}}^{\rm th}$ and the ${r_{k+1}}^{\rm th}$ up steps of $P$. The grid path $v_1$ in \tam{v} is the (unique) grid path such that its $k^{\rm th}$ north step is at distance $c_k$ to the left of the $k^{\rm th}$ north step in $v$, for all $1\leq k \leq s-1$. The (translated) bijection in \tamref{} sends $P$ to the pair of grid paths $(v_1,v)$. 

\begin{figure}[!bhtp]
\begin{center}
\includegraphics[page=17,scale=0.9]{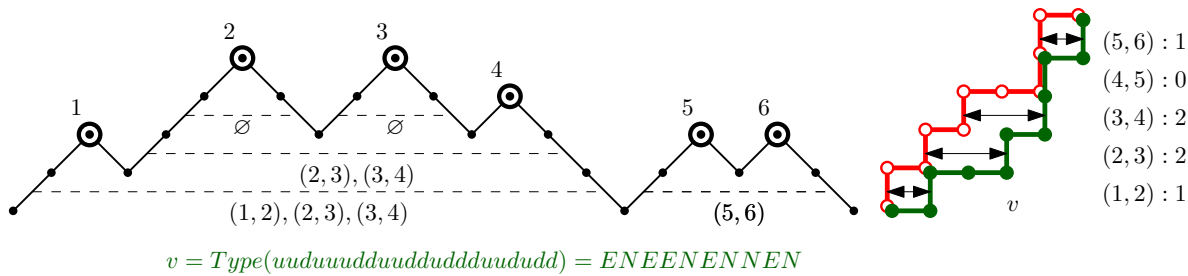}
\end{center}
\caption{An example of the bijection between Dyck paths and pairs of non-crossing grid paths} \label{fig:dyck-gen-bij}
\end{figure}

Since this bijection specializes to isomorphisms between lattices, we thus have a bijection between canopy intervals of size $n-1$ and synchronized intervals of size $n$, which implies that they are equi-enumerated. 

\section{Recursive decompositions}

We are now interested in the link between two families of objects: synchronized intervals in the usual Tamari lattice on Dyck paths of size $n$, and non-separable planar maps with $n+1$ edges. In fact, their enumerations are governed by the same functional equation. In this section, we show how to decompose recursively these two families of combinatorial objects to obtain a functional equation of their generating functions. We reiterate that our main contribution, which is the enumeration of generalized intervals via a (non-recursive) bijection, will be described explicitly in the next section.

\subsection{Recursive decomposition of synchronized intervals}

We define a \emph{properly pointed Dyck path} to be a Dyck path $P = P^\ell P^r$ such that $P^\ell$ and $P^r$ are Dyck paths, and $P^\ell$ is not empty unless $P$ is itself empty. A \emph{properly pointed synchronized interval} $[P^\ell P^r, Q]$ is a synchronized interval where the lower Dyck path is properly pointed. We recall that $\mathcal{I}_n$ is the set of synchronized intervals of size $n$, and we denote by $\mathcal{I}^\bullet_n$ the set of properly pointed synchronized intervals of size $n$. 

Before giving a recursive decomposition of synchronized intervals, we need to 
borrow a lemma from \cite{bousquet-fusy-preville}. 


\begin{lem}[Proposition~5 in \cite{bousquet-fusy-preville}]\label{ericmirlouis}
Let $P$ and $Q$ be two Dyck paths of size $n$. Then $P \leq Q$ in the Tamari lattice if and only if $D_{P}(i) \leq D_{Q}(i)$ for all $1 \leq i \leq n$.
\end{lem}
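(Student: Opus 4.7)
My plan is to prove the two directions separately. For the forward direction, by transitivity it suffices to check that if $P \prec Q$ is a single Tamari cover, then $D_P(i) \leq D_Q(i)$ for all $i$. The Tamari cover on Dyck paths can be described by writing $P = \alpha \cdot u_i \cdot A \cdot d \cdot B \cdot \omega$, where $u_i$ is the indicated up step matched with the highlighted $d$ and $B$ is a nonempty Dyck subpath, and taking $Q = \alpha \cdot u_i \cdot A \cdot B \cdot d \cdot \omega$. A direct inspection shows that exactly one distance value changes: $D_Q(i) = D_P(i) + |B|$, while $D_Q(j) = D_P(j)$ for every $j \neq i$, because all other matchings are contained entirely inside $\alpha$, $A$, $B$, or $\omega$. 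Hence the pointwise inequality is preserved along each cover, and so along the whole order.

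For the backward direction, assuming $D_P(i) \leq D_Q(i)$ for all $i$, I would induct on the nonnegative integer $\Delta(P,Q) := \sum_{i=1}^n (D_Q(i) - D_P(i))$. When $\Delta = 0$ we have $D_P = D_Q$; since the position of the match of each $u_i$ is read off from $D_P(i)$, the distance function determines the Dyck path, so $P = Q$. When $\Delta > 0$, the goal is to exhibit a Tamari cover $P \prec P'$ with $D_{P'}(j) \leq D_Q(j)$ for all $j$. Such a $P'$ changes exactly one coordinate upward by the analysis above, which combined with the hypothesis strictly decreases $\Delta$, and the induction then closes.

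The main obstacle is thus locating an index at which a cover can be applied without overshooting $D_Q$. A natural candidate is the largest index $i$ with $D_P(i) < D_Q(i)$, so that all coordinates to its right already agree with $Q$. I would verify two things about this $i$: first, that the step immediately following the match of $u_i$ in $P$ is an up step (so that a cover of the form $u_i \cdot A \cdot d \cdot B \cdot \omega$ is available), which I expect to derive by combining $D_P(i) < D_Q(i)$ with the agreement to the right and the fact that $P$ and $Q$ have the same total length; and second, that the increment $|B|$ produced by the minimal such cover satisfies $|B| \leq D_Q(i) - D_P(i)$, which should follow by comparing the decomposition of $P$ around $u_i$ with the analogous decomposition of $Q$ made transparent by the frozen common suffix. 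Carrying out this local matching argument cleanly is the delicate step, and the rest of the proof reduces to bookkeeping.
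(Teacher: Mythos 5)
The paper does not prove this lemma: it is explicitly borrowed from \cite{bousquet-fusy-preville} (Proposition~5 there), so there is no internal proof to compare against. Evaluating your argument on its own terms: the forward direction is sound. A Tamari cover $\alpha\, u_i\, A\, d\, B\, \omega \mapsto \alpha\, u_i\, A\, B\, d\, \omega$ does change only the $i$-th coordinate (by $|B|$), since $\alpha$, $A$, $B$, $\omega$ are preserved block-by-block and $u_i A d$ and $B$ are both balanced, so no matching crossing the block boundary can move.

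The backward direction, however, has a genuine gap at the step you yourself flag as delicate. Your candidate index --- the \emph{largest} $i$ with $D_P(i) < D_Q(i)$ --- need not admit a cover at all, because the step following the match of $u_i$ in $P$ can be a down step. Take $P = uuddud$ and $Q = uuuddd$, so $D_P = (3,1,1)$ and $D_Q = (5,3,1)$; the largest differing index is $i=2$, but in $P$ the second up step (position $2$) is matched at position $3$ and $P_4 = d$, so there is no valley there and no cover increases $D_P(2)$. Your heuristic (``agreement to the right plus equal total length forces an up step'') fails because the agreement $D_P(3) = D_Q(3)$ controls where $u_3$'s match is relative to $u_3$, not what sits immediately after $u_2$'s match; the latter is governed by the \emph{outer} matched pair $u_1$, which does not yet agree. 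In this example the only available cover is at $u_1$ (the \emph{smallest} differing index), and indeed it does not overshoot: $P' = uududd$, $D_{P'} = (5,1,1) \leq D_Q$. This suggests that the smallest index with $D_P(i) < D_Q(i)$ is the right candidate --- for it one can show by comparing levels that a $d$ at the position after $u_i$'s match would force the same $d$, matching the same outer $u_{i'}$, at the same position in $Q$, contradicting $D_P(i) < D_Q(i)$ --- but the non-overshoot bound $|B| \leq D_Q(i) - D_P(i)$ then still needs its own argument, which your sketch does not supply. As written, the induction does not close.
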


We can now describe a way to construct a larger synchronized interval from a smaller synchronized interval and a properly-pointed synchronized interval.

\begin{prop} \label{prop:rec-construct}
Let $I_1 = [P_1^\ell P_1^r, Q_1]$ be a properly pointed synchronized interval and $I_2 = [P_2, Q_2]$ a synchronized interval. We construct the Dyck paths
\[ P = u P_1^\ell d P_1^r P_2, \qquad Q = u Q_1 d Q_2. \]
Then $I = [P,Q]$ is a synchronized interval. Moreover, this transformation from $(I_1, I_2)$ to $I$ is a bijection between $\cup_{n \geq 0} \mathcal{I}^\bullet_n \times \cup_{n \geq 0} \mathcal{I}_n$ and $\cup_{n > 0} \mathcal{I}_n$.
\end{prop}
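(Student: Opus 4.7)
\emph{Proof plan.} The plan is to split the proof into two halves: first checking that the construction $(I_1,I_2) \mapsto [P,Q]$ always lands in $\mathcal{I}_n$ for some $n > 0$, then producing an explicit inverse. Both halves will rely on Lemma~\ref{ericmirlouis} (Tamari order $\Leftrightarrow$ pointwise comparison of the distance function $D$) and on the definition of $\mathit{Type}$ as the word of letters immediately following each non-final up step.

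For the forward step, I would split the up-steps of $P = u P_1^\ell d P_1^r P_2$ and, symmetrically, of $Q = u Q_1 d Q_2$ into three blocks: the leading $u$, the $m$ up-steps of $P_1 := P_1^\ell P_1^r$ (resp.\ $Q_1$), and the up-steps of $P_2$ (resp.\ $Q_2$), and verify the type equality and the $D$-inequalities block by block. The key observation for types is that every up-step of $P_1$ has the same ``next letter'' in $P$ as in $P_1$: the last $u$ of $P_1^\ell$ is followed by a $d$ of $P_1^\ell$ (since $P_1^\ell$ ends in $d$), and similarly for $P_1^r$, so the inserted $d$ between $P_1^\ell$ and $P_1^r$ never appears as anyone's next letter. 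Thus $\mathit{Type}(P_1) = \mathit{Type}(Q_1)$ governs the middle block and $\mathit{Type}(P_2) = \mathit{Type}(Q_2)$ the last. For the leading $u$, the properly pointed hypothesis gives $P_1^\ell = \emptyset \Leftrightarrow P_1 = \emptyset \Leftrightarrow Q_1 = \emptyset$, so in both paths the leading $u$ is followed by $u$ in the non-empty case and $d$ in the empty case. The Tamari check is similar: $D_P(1) = |P_1^\ell| + 1 \le |Q_1| + 1 = D_Q(1)$ directly, and on the remaining two blocks the matches of up-steps in $P_1$ (resp.\ $P_2$) stay inside $P_1$ (resp.\ $P_2$), so $D_P$ restricts to $D_{P_1}$ and $D_{P_2}$; combined with $P_1 \le Q_1$ and $P_2 \le Q_2$, Lemma~\ref{ericmirlouis} gives $P \le Q$.

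For the inverse, given $[P,Q] \in \mathcal{I}_n$ with $n > 0$, take the first-return decompositions $Q = u Q_1 d Q_2$ and $P = u A d B$. The bound $|A| \le |Q_1|$ is immediate from $D_P(1) \le D_Q(1)$, so setting $P_1^\ell := A$ the only outstanding task is to split $B = B' B''$ into Dyck paths with $|B'| = |Q_1| - |A|$---equivalently, to show that the position $|Q_1|+2$ is a height-$0$ point of $P$. This is the main obstacle. I plan to deduce it from the more general fact that $P \le Q$ in the Tamari lattice forces $h_j(P) \le h_j(Q)$ at every $j$, where $h_j$ is the height after the $j$-th letter of the path. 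This fact can be proved by induction on covers: a single Tamari cover on Dyck paths has the local form $\ldots d u X d \ldots \lessdot \ldots u X d d \ldots$ for some Dyck factor $u X d$, and a direct computation of heights at each position in the affected window shows this local move can only raise the profile. Applied to $j = |Q_1|+2$, where $h_j(Q) = 0$, the non-negativity of Dyck-path heights then forces $h_{|Q_1|+2}(P) = 0$. Setting $P_1^r := B'$ and $P_2 := B''$, the verifications that $[P_1^\ell P_1^r, Q_1]$ belongs to $\mathcal{I}^\bullet$ and $[P_2, Q_2]$ to $\mathcal{I}$ will repeat the forward restriction arguments, with properly pointedness of the first coming from $\mathit{Type}(P)_1 = \mathit{Type}(Q)_1$ (forcing $P_1^\ell = \emptyset \Leftrightarrow Q_1 = \emptyset$); the two maps will then be mutually inverse by construction.
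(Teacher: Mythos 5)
Your proposal follows the same overall plan as the paper's proof: for the forward direction, verify type equality block by block and the Tamari comparison via the distance function $D$ using Lemma~\ref{ericmirlouis}; for the inverse, use first-return decompositions of $Q$ and then of the initial factor of $P$. Where you go beyond the paper is precisely the step you flag as ``the main obstacle'': the paper simply asserts that $P$ also touches the $x$-axis at the point where $Q$ first returns, whereas you supply a proof via the fact that the Tamari order refines the dominance (Stanley) order on Dyck paths. Your cover-by-cover check is correct: replacing $d\,uXd$ by $uXd\,d$ (with $uXd$ the first-return factor at the valley) only weakly raises the height profile at each position, so $P \le Q$ in Tamari gives $h_j(P) \le h_j(Q)$ for all $j$; combined with $h_j(P) \ge 0$ this forces $h_{|Q_1|+2}(P)=0$. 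This is a real gap in the paper's one-sentence treatment, and your fix is the natural one (one could also cite the known comparison of the Tamari and Stanley lattices, e.g.\ Bernardi--Bonichon). Your remaining checks---that $D$ restricts to the factors because the matched down step of each up step stays in the same Dyck factor, and that $\mathrm{Type}(P)_1 = \mathrm{Type}(Q)_1$ forces $P_1^\ell = \emptyset \Leftrightarrow Q_1 = \emptyset$ for the properly-pointed condition---are also correct and spelled out more carefully than in the paper, which leaves both directions largely to the reader.
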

\begin{proof} 
An illustration of the construction of $I$ is given in Figure~\ref{fig:interval-decomp}. To show that $I$ is a synchronized interval, we only need to show that $P$ and $Q$ have the same type, and $[P,Q]$ is an interval in the Tamari lattice. To show that $\mathrm{Type}(P)=\mathrm{Type}(Q)$, we notice that $P_2$ and $Q_2$ are of the same type since they form a synchronized interval. It is clear that, for two Dyck paths $P_\ell, P_r$, the type of their concatenation $P_\ell P_r$ is the concatenation of $\mathrm{Type}(P_\ell)$ and $\mathrm{Type}(P_r)$. Let $W_1 = \mathrm{Type}(u P_1^\ell d P_1^r)$ and $W_2 = \mathrm{Type}(u Q_1 d)$. We only need to show that $W_1 = W_2$. We write $W_1 = w_1 W_1'$ and $W_2 = w_2 W_2'$, where $w_1$ (resp. $w_2$) is the first letter of $W_1$ (resp. $W_2$), and $W_1'$ (resp. $W_2'$) is the rest of the word $W_1$ (resp. $W_2$). We clearly have $w_1 = w_2$, since $w_1 = N$ if and only if $P_1^\ell$ is empty, which is equivalent to $Q_1$ being empty, which occurs if and only if $w_2=N$. For the rests $W_1'$, since $P_1^\ell$ is a Dyck path that ends in $d$ by definition, we have $W_1' = \mathrm{Type}(P_1^\ell)\mathrm{Type}(P_1^r) = \mathrm{Type}(P_1^\ell,P_1^r)$. The same argument applies to $W_2'$ and $Q_1$, which leads to $W_2' = \mathrm{Type}(Q_1)$. Since $[P_1,Q_1]$ is a synchronized interval, we have $W_1'=W_2'$. We conclude that $W_1=W_2$. Therefore, $P$ and $Q$ are of the same type. It is not difficult to show that $[P,Q]$ is a Tamari interval using Lemma~\ref{ericmirlouis} and the fact that both $I_1 = [P_1^\ell P_1^r,Q_1]$ and $I_2=[P_2,Q_2]$ are also Tamari intervals.



To show that the transformation we described is indeed a bijection, we only need to show that we can decompose any non-empty Tamari interval $I=[P,Q]$ back into $(I_1,I_2)$. To go back from $I=[P,Q]$ to $(I_1,I_2)$, we only need to split $P$ and $Q$ into $P=u P_1^\ell d P_1^r P_2$ and $Q=u Q_1 d Q_2$ such that $P_1^\ell, P_1^r, P_2, Q_1, Q_2$ are all Dyck paths with $P_2, Q_2$ of the same length. This can be done by first cutting $Q$ at the first place that it touches again the $x$-axis, where $P$ also touches the $x$-axis. This cutting breaks $Q$ into $u Q_1 d$ and $Q_2$, and $P$ into $P_1$ and $P_2$. We then perform the same operation on $P_1$ to cut it into $u P_1^\ell d$ and $P_1^r$. We thus conclude that we indeed have a bijection between $\cup_{n \geq 0} \mathcal{I}^\bullet_n \times \cup_{n \geq 0} \mathcal{I}_n$ and $\cup_{n > 0} \mathcal{I}_n$.
\end{proof}

\begin{figure}[!htbp]
\centering
\includegraphics[page=4, scale=0.85]{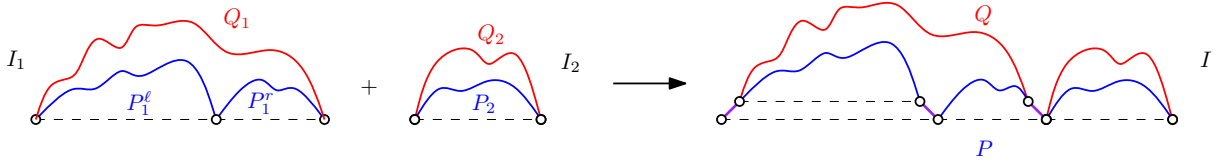}
\caption{Composition of synchronized intervals} \label{fig:interval-decomp}
\end{figure}

Since the construction in Proposition~\ref{prop:rec-construct} is a bijection, we can also see it in the reverse direction as a recursive decomposition. We now translate the recursive decomposition in Proposition~\ref{prop:rec-construct} into a functional equation for the generating function of synchronized intervals. To this end, we need to investigate another statistic on synchronized intervals, which will give us a suitable catalytic variable for our functional equation. A \emph{contact} of a Dyck path $P$ is an intersection of $P$ with the $x$-axis. Both endpoints of $P$ are also considered as contacts. Let $\operatorname{contacts}(P)$ be the number of contacts of $P$. We define $F(x,t)$ as the following generating function of synchronized intervals: 
\[ F(x,t) = \sum_{n \geq 1} \sum_{[P,Q] \in \mathcal{I}_n} t^n x^{\operatorname{contacts}(P)-1}. \]

From Proposition \ref{prop:rec-construct}, we know that a non-empty synchronized interval $I=[P,Q]$ can be decomposed into $P=u P_1^\ell d P_1^r P_2$ and $Q = u Q_1 d Q_2$, where $P_1^\ell, P_1^r, P_2, Q_1, Q_2$ are all Dyck paths, and both $[P_1^\ell P_1^r, Q_1]$ and $[P_2,Q_2]$ are synchronized intervals. The generating functions for the intervals of the form $[u P_1^\ell d P_1^r,u Q_1 d]$ is given by $xt\left( 1 + \frac{F(x,t) - F(1,t)}{x-1} \right)$, where the divided difference accounts for \emph{pointing} each non-initial contact (individually) over all elements in $\mathcal{I}_n$ to obtain properly pointed intervals of the form $I^\bullet=[P_1^\ell P_1^r,Q_1]$. Indeed, a non-empty synchronized interval $[P_1,Q_1]$ of length $2n$ with $k+1$ contacts on $P_1$ has a contribution of $x^k t^n$ to the generating function $F(x,t)$. Furthermore, there are exactly $k$ ways to turn $P_1$ into a properly pointed Dyck path $P_1^\ell P_1^r$, by cutting at one of the contacts, except the first one. When lifted from $P_1^\ell P_1^r$ to $u P_1^\ell d  P_1^r$, all contacts inside $P_1^\ell$ except the initial one are lost, which leads to $k$ Dyck paths with length $2n+2$ and a number of contacts from $2$ to $k+1$. These Dyck paths obtained from $P_1$, paired with $u Q_1 d$ to form a synchronized interval, thus have a total contribution of $t^{n+1}(x+x^2+\cdots+x^k)=xt\frac{t^n x^k - t^n}{x-1}$. We recall that the contribution of $[P_1, Q_1]$ to $F(x,t)$ is exactly $t^n x^k$. By summing over all possible $[P_1, Q_1]$, we obtain the divided difference $xt\frac{F(x,t)-F(1,t)}{x-1}$ for the contribution of the part $[P_1^\ell P_1^r, Q_1]$ in the decomposition of synchronized intervals. The extra term $xt$ is for the case where $P_1$ is empty. On the other hand, we observe that the path $P = u P_1^\ell d P_1^r P_2$  has $\operatorname{contacts}(P) -1= (\operatorname{contacts}(u P_1^\ell d P_1^r) \! - \!1) + (\operatorname{contacts}(P_2) \! - \! 1)$. Therefore, from Proposition \ref{prop:rec-construct}, we obtain the functional equation 
\begin{equation} \label{eq:interval} F(x,t) = xt \left( 1 + \frac{F(x,t) - F(1,t)}{x-1} \right) (1 + F(x,t)).
\end{equation}

\subsection{Recursive decomposition of non-separable planar maps}

We now turn to non-separable planar maps, which were first enumerated by Tutte in \cite{tutte1963census} using algebraic methods, then by Jacquard and Schaeffer in \cite{JS1998bijective} using a bijection based on their recursive decomposition. A \emph{planar map} is an embedding of a connected graph on the sphere defined up to homeomorphism, with one oriented edge called the \emph{root}. The origin vertex of the root is called the \emph{root vertex}. We call the face on the left of the root the \emph{outer face}. A planar map is called \emph{separable} if its edges can be partitioned into two sets such that only one vertex $v$ is adjacent to some edges in both sets. Such a vertex is called a \emph{cut vertex}. A \emph{non-separable planar map} is a planar map containing at least two edges that is not separable. Figure~\ref{fig:non-sep-planar-map} gives an example of such a map. Note that we exclude the two one-edge maps. 

\begin{figure}[!htbp]
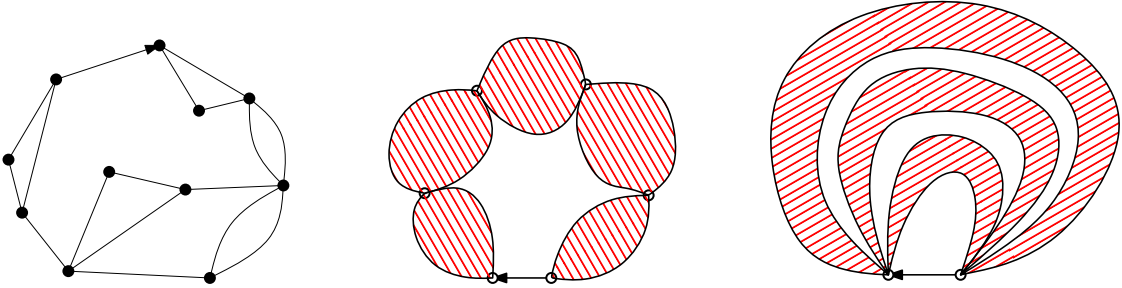

\centering
\includegraphics[page=1, scale=0.75]{figures.pdf} \quad \quad \quad \includegraphics[page=5, scale=0.75]{figures.pdf}
\caption{A non-separable planar map, and series/parallel decompositions of non-separable planar maps} \label{fig:non-sep-planar-map} \label{fig:map-decomp}
\end{figure}

\begin{prop}[Corollary II in \cite{tutte1963census}] \label{prop:non-sep-self-dual}
The dual of a non-separable planar map is also non-separable.
\end{prop}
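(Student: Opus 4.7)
My plan is to reduce the statement to a self-dual structural property. The key characterization I will use is that a planar map with at least two edges is non-separable if and only if every face is bounded by a simple cycle, i.e., no vertex appears more than once in any face boundary walk (this in particular rules out loops and bridges, since those would force a repeated vertex). Once this characterization is established, the proposition follows immediately.

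For the implication ``simple-cycle face boundaries imply non-separable'', I would argue by contraposition. Suppose $M$ is separable at a cut vertex $v$, with edge partition $E_1 \cup E_2$ meeting only at $v$. In the rotation at $v$, the edges of $E_1$ and $E_2$ occupy disjoint cyclic arcs, so there are two consecutive edges $e_1 \in E_1$, $e_2 \in E_2$ at $v$. The face corner between $e_1$ and $e_2$ belongs to some face $f$, and the boundary walk of $f$ starts by leaving $v$ along $e_1$ into the $E_1$-component, must eventually return to $v$ (since it cannot cross into the $E_2$-component except through $v$), then traverses the $E_2$-side before returning to its starting corner. Hence $\partial f$ visits $v$ at least twice. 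For the converse, suppose some face $f$ visits a vertex $v$ twice on its boundary. Draw a chord $\gamma$ inside $f$ joining the two copies of $v$; the two arcs of $\partial f$ between the two occurrences of $v$, each completed by $\gamma$, form two closed Jordan curves on the sphere sharing only $v$. By the Jordan curve theorem, these curves split the remaining edges of $M$ into two groups meeting only at $v$, so $v$ is a cut vertex and $M$ is separable.

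With the characterization in hand, the duality step becomes a short structural argument. A face of $M^*$ corresponds to a vertex $v$ of $M$, and its boundary walk is (up to orientation) the cyclic sequence in the rotation around $v$, alternating between dual vertices --- one for each face of $M$ incident to $v$ --- and dual edges --- one for each primal edge at $v$. This dual face walk repeats a dual vertex if and only if some primal face of $M$ contains $v$ more than once on its boundary, and it repeats a dual edge if and only if $v$ carries a loop. If $M$ is non-separable, the characterization gives that no face of $M$ contains any vertex twice and in particular no vertex carries a loop, so every face of $M^*$ is a simple cycle. Since $M$ and $M^*$ have the same number of edges, $M^*$ has at least two; applying the characterization in the reverse direction yields that $M^*$ is non-separable.

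The main obstacle is making the planar-topology argument in the converse direction of the characterization fully rigorous: specifically, showing via the Jordan curve theorem that a repeated vertex on a face walk genuinely produces a separation of all remaining edges into two groups sharing only that vertex, rather than merely a local splitting. The rest --- verifying the correspondence between face walks in $M^*$ and rotations around vertices of $M$ --- is routine bookkeeping about the combinatorial structure of the dual map.
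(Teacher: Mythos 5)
The paper does not actually prove this proposition: it is stated with the bracketed attribution ``Corollary~II in Tutte's 1963 census paper'' and cited, not argued. So there is no in-paper proof to compare against, and your task was effectively to reconstruct a folklore fact.

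Your overall strategy---reduce non-separability to the manifestly self-dual property that every face boundary walk is a simple closed walk, then observe that the boundary walk of the dual face $v^*$ visits the dual vertex $f^*$ exactly as many times as the boundary walk of $f$ visits $v$---is sound, and the duality bookkeeping is correct. Two remarks, though. First, in the forward direction of the characterization, the claim ``in the rotation at $v$, the edges of $E_1$ and $E_2$ occupy disjoint cyclic arcs'' is false for an arbitrary separating partition: for the star $K_{1,4}$ rooted at $v$, the alternating partition $E_1=\{vu_1,vu_3\}$, $E_2=\{vu_2,vu_4\}$ is a legitimate separation (only $v$ meets both sides) and the arcs interleave. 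Fortunately your argument does not need disjoint arcs: the mere existence of two consecutive edge-ends $e_1\in E_1$, $e_2\in E_2$ at $v$ (which holds because both parts contain edges at $v$) already gives a face corner; and the boundary walk of that face, leaving $v$ into one part and ultimately re-entering from the other, must pass through $v$ at an interior step because $v$ is the unique vertex where a walk can switch parts. So you should drop the disjoint-arcs remark and argue directly that the first edge of the walk lying in the ``other'' part forces an interior visit to $v$. Second, the Jordan-curve step you flag in the converse is indeed the topological crux; one also needs to note that both arcs of $\partial f$ cut off by the chord are nonempty (each contains at least one edge, since the two occurrences of $v$ are distinct corners), so that the resulting edge bipartition is proper. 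With those repairs the proof is complete; an arguably tighter route, which may be closer to Tutte's, is to observe directly that $M$ is separable iff there is a simple closed curve on the sphere meeting $M$ in a single point with at least one edge of $M$ strictly on each side, a description that is visibly preserved under duality.
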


There are two ways to decompose non-separable planar maps recursively. We will call them ``series'' and ``parallel'' decompositions respectively. We only need one decomposition for the functional equation, but describing both leads to a more thorough understanding. In Figure~\ref{fig:map-decomp}, we sketch how larger maps can be built with smaller maps in both series and parallel ways.

For the series decomposition of a non-separable planar map $M$, we delete its root, and the remaining map $M'$ may cease to be non-separable. In $M'$, every cut vertex splits $M'$ into two parts, each containing an endpoint of the root. The remaining map is thus a series of non-separable planar maps (and possibly single edges) linked by cut vertices (see Figure~\ref{fig:map-decomp}, middle). Let $\mathcal{M}_n$ be the set of non-separable planar maps with $n+1$ edges, and $M_s(x,t)$ the generating function of non-separable planar maps defined as
\[ M_s(x,t) = \sum_{n \geq 1} \sum_{M \in \mathcal{M}_n} t^n x^{\deg(\textrm{outer face}(M)) - 1}. \]

For a component in the series, we root it at its first edge adjacent to the outer face in clockwise order to obtain a non-separable planar map such that the root vertex is one of the linking vertices in the chain. Conversely, from a non-separable planar map $M$ with $n+1$ edges and the outer face of degree $k+1$ (therefore of contribution $t^n x^k$ in $M_s(x,t)$), there are $k$ choices for the cut vertex other than the root vertex to obtain a component, each adding a value from $1$ to $k$ to the outer face degree. These choices thus have a total contribution of $t^{n+1} \sum_{i=1}^k x^i = tx \frac{t^n x^k - t^n}{x-1}$. We recall that $M$ has a contribution $t^n x^k$ to $M_s(x,t)$. By summing over all possible $M$, we know that the generating function of these non-separable components is the divided difference $\frac{M_s(x,t) - M_s(1,t)}{x-1}$. On the other hand, the contribution of a single edge between two cut vertices is $xt$. Since the series decomposition of a non-separable map can be seen as a non-empty sequence of non-separable components and single edges, we obtain the following functional equation:
\begin{equation} \label{eq:map}
M_s(x,t) = \frac{xt + xt\frac{M_s(x,t)-M_s(1,t)}{x-1}}{1 - \left(xt + xt\frac{M_s(x,t)-M_s(1,t)}{x-1} \right)}.
\end{equation}
A reordering gives the same functional equation as \eqref{eq:interval}.

For the parallel decomposition, we consider the effect of contracting the root. Let $M'$ be the map obtained from contracting the root edge of a non-separable planar map $M$, and $u$ the vertex of the map $M'$ resulting from the contraction of the root. The only possible cut vertex in $M'$ is $u$. By deleting $u$ and attaching a new vertex of each edge adjacent to $u$, we have an ordered list of non-separable planar components (and possibly single edges) that come in parallel (see Figure~\ref{fig:map-decomp}, right). By identifying the newly-added vertices in each connected component, we obtain an ordered list of non-separable planar maps (and possibly loops). Let $M_p(x,t)$ be the generating function of non-separable planar maps as follows:
\[ M_p(x,t) = \sum_{M\in \mathcal{M}_n} t^n x^{\deg(v) - 1}. \]
Here, $v$ is the root vertex of $M$.

To obtain a non-separable component from a non-separable planar map, we only need to split the root vertex into two, that is to say to partition edges adjacent to the root vertex into two non-empty sets formed by consecutive edges. For a root vertex $v$ of degree $k$, this can be done by choosing a corner of $v$ other than the root corner, and splitting the edges by the chosen corner and the root corner. There are exactly $k-1$ choices. From a non-separable planar map with root vertex degree $k$, we can thus obtain $k-1$ different non-separable components that we can use in the parallel decomposition, each with root vertex degree from $1$ to $k-1$. We can thus write a functional equation for $M_p$, with the degree of the root vertex minus 1 as the statistics of the catalytic variable. We leave readers to check that the parallel decomposition leads to the same equation as (\ref{eq:interval}). 

Since $F, M_s, M_p$ all obey the same functional equation, we have $F = M_s = M_p$, therefore these objects are equi-enumerated under the specified statistics, which invites us to search for a bijective proof. Observe that $M_s = M_p$ already has a simple bijective explanation by duality. For instance, for a non-separable planar map $M$, we take its dual $M^\dagger$ and root it in a way such that the root vertex of $M^\dagger$ is the dual of the outer face of $M$, and the outer face of $M^\dagger$ is the dual of the root vertex of $M$. This bijection preserves the number of edges and transfers the degree of the outer face to the degree of the vertex from which the root points, which implies $M_s=M_p$.

\section{Bijections}

We now present our main contribution. To describe our bijection from synchronized intervals to non-separable planar maps, we first introduce a family of trees. We take the convention that the root of a tree is of depth $0$. The \emph{traversal order} on the leaves of a tree is simply the counter-clockwise order. A \emph{decorated tree} is a rooted plane tree with an integer label at least $-1$ on each leaf satisfying the following conditions:
\begin{enumerate}
\item For a leaf $\ell$ adjacent to a vertex of depth $p$, the label of $\ell$ is strictly smaller than $p$.
\item For each internal node of depth $p>0$, there is at least one leaf in its descendants with a label at most $p-2$.
\item For $t$ a node of depth $p$ and $T'$ a sub-tree rooted at a child of $t$, consider leaves of $T'$ in traversal order. If a leaf $\ell$ is labeled $p$ (which is the depth of $t$), each leaf in $T'$ coming before $\ell$ has a label at least $p$.
\end{enumerate}
The right side of Figure~\ref{fig:bij-map-tree} gives an example of a decorated tree. In a decorated tree, a leaf labeled with $-1$ is called a \emph{free leaf}. We denote by $\mathcal{T}_n$ the set of decorated trees with $n$ edges (internal and external).

The definition of decorated trees may not be very intuitive, but after the introduction of the exploration process, we will see that each condition captures an important aspect of non-separable planar maps, and together they characterize trees that we obtain from non-separable planar maps via the exploration process we will define.

\subsection{From maps to trees}

We start with a bijection from non-separable planar maps to decorated trees, which relies on the following exploration procedure. For a non-separable planar map $M$ with a root pointing from $v$ to $u$, we perform a depth-first exploration of vertices in clockwise order around each vertex, starting from $v$ and the root. When the exploration along an edge adjacent to the current vertex $w$ encounters an already visited vertex $x$, we replace the edge by a leaf attached to $w$ labeled with the depth of $x$ in the tree, with the convention that the depth of $v$ is $-1$. Since the map is non-separable, this exploration gives a spanning tree whose root $v$ has degree $1$, or else $v$ will be a cut vertex of the map. We then delete the edge $(v,u)$ to obtain $\mathrm{T}(M)$. Figure~\ref{fig:bij-map-tree} gives an instance of the transformation $\mathrm{T}$.

\begin{figure}[!htbp]
\centering
\includegraphics[page=2, scale=0.85]{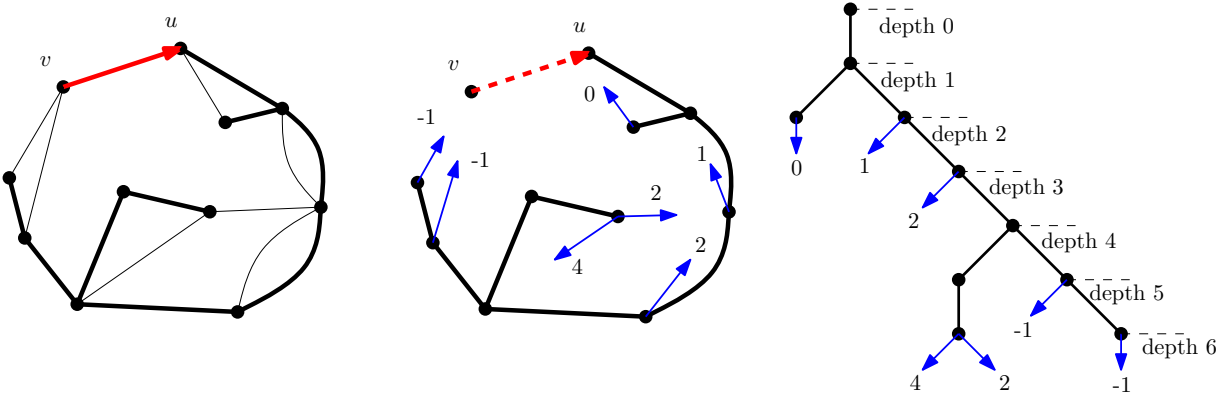}
\caption{An example of the bijection between non-separable planar maps and decorated trees} \label{fig:bij-map-tree}
\end{figure}

By abuse of notation, we identify internal nodes of $\mathrm{T}(M)$ with corresponding vertices in $M$. We notice that, for children of the same vertex in the tree, the ones being visited first in the exploration of the map come last in the traversal order. Readers familiar with graph algorithms will notice that this exploration procedure is very close to an algorithm proposed by Hopcroft and Tarjan in \cite{hopcroft} that finds 2-connected components of an undirected graph. Indeed, our exploration procedure can be seen as an adaptation of that algorithm to planar maps, where there is a natural cyclic order for edges adjacent to a given vertex.

We now present the inverse $\mathrm{S}$ of $\mathrm{T}$, with an example illustrated in Figure~\ref{fig:bij-S}. For a decorated tree $T$ rooted at $u$, we define $\mathrm{S}(T)$ as the map obtained according to the following steps.
\begin{enumerate}
\item Attach an edge $\{u,v\}$ to $u$ with a new vertex $v$, and make it the root, pointing from $v$ to $u$.
\item In clockwise order, for each leaf $\ell$ in the tree starting from the last leaf in traversal order, do the following. Let $t$ be the parent of $\ell$ and $p$ the label of $\ell$. Let $s$ be the ancestor of $\ell$ of depth $p$, and $e$ be the first edge of the path from $s$ to $\ell$ (thus an edge adjacent to $s$). We replace $\ell$ by an edge from $t$ to $s$ by attaching its other end to $s$ just after $e$ in clockwise order around $s$. See also the right-hand side of Figure~\ref{fig:ST}
\end{enumerate}

\begin{figure}[!htbp]
\centering
\includegraphics[page=12, scale=0.85]{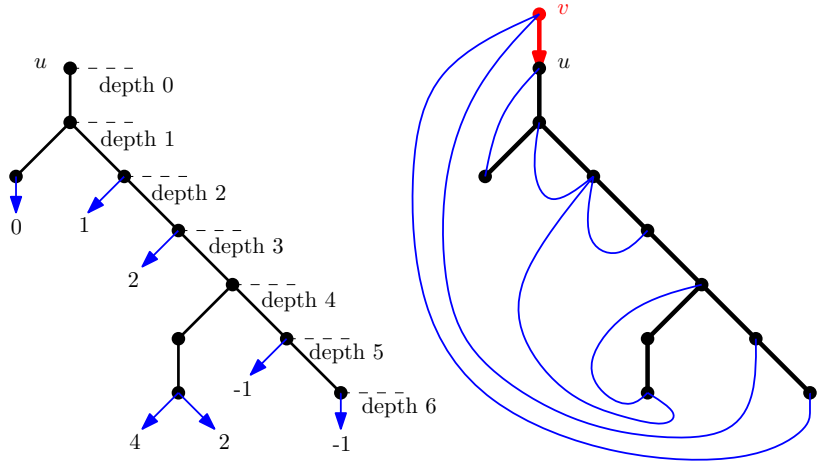}
\caption{An example of the bijection $\mathrm{S}$ from decorated trees to non-separable planar maps}
\label{fig:bij-S}
\end{figure}

From the definitions of $\mathrm{T}$, we will show that the first and third conditions of decorated trees guarantee that $\mathrm{T}$ is an exploration tree of a certain planar map, and the second guarantees the map is non-separable. We now give detailed proofs that $\mathrm{T}$ and $\mathrm{S}$ are well-defined transformations between $\mathcal{M}$ and $\mathcal{T}$, and that they are bijective and inverses of each other.

\begin{prop} \label{inclusionTMnintoTn} 
$\mathrm{T}(\mathcal{M}_n) \subset \mathcal{T}_n$.
\end{prop}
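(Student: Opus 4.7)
My plan is to show that for every non-separable planar map $M \in \mathcal{M}_n$, the exploration output $\mathrm{T}(M)$ is a rooted plane tree with $n$ edges whose leaf labels satisfy the three defining conditions of a decorated tree. First I would verify the structural count: the exploration runs a depth-first search from $v$ along the root, turns each non-tree edge into a leaf at its deeper endpoint, and inherits its plane structure from the cyclic order of edges around each vertex of $M$. Since $M$ is non-separable, the DFS root $v$ has exactly one DFS-child $u$ (otherwise $v$ would be a cut vertex), so deleting the root edge $(v,u)$ removes $v$ together with one edge and leaves a rooted plane tree with $(n+1)-1 = n$ edges.

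Condition 1 is immediate from DFS on undirected graphs: every non-tree edge is a back-edge to a strict ancestor, so the label attached at a vertex of depth $p$ is the depth of one of its strict ancestors, hence at most $p-1$. For condition 2 I would argue by contrapositive. Suppose some internal node $t$ of depth $p>0$ has all descendant leaves labeled $\geq p-1$, and let $q$ denote its parent (so $q$ has depth $p-1$). Every back-edge leaving the subtree of $t$ must then land on $q$: an escape to a strict ancestor of $q$ would produce a label $\leq p-2$, which is forbidden; labels $\geq p$ target vertices inside the subtree of $t$ and so do not leave it. Removing $q$ from $M$ therefore disconnects the subtree of $t$ from the rest of $M$, making $q$ a cut vertex, contradicting the non-separability of $M$. (When $p=1$ the parent $q$ is $u$, and the forbidden label $-1$ corresponds to a back-edge to $v$.) This is just the standard low-link characterization of biconnectivity.

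Condition 3 is the genuinely planar statement and is what I expect to be the main obstacle. Fix $t$ of depth $p$ and a subtree $T'$ rooted at a DFS-child $t'$ of $t$. Using the noted fact that at every internal node the children are reversed between DFS discovery order and traversal order, an induction on depth shows that the traversal order on the leaves of $T'$ is precisely the reverse of the DFS discovery order of the corresponding back-edges inside $T'$. It therefore suffices to prove that, within the DFS of $T'$, every back-edge leaving $T'$ to a strict ancestor of $t$ (label $<p$) is discovered before any back-edge landing on $t$ itself (label $p$). This I would derive from the clockwise planar exploration convention: at any vertex of $T'$ carrying both kinds of back-edges, the planar embedding forces the escape to a higher ancestor to come clockwise before the edge to $t$, because the latter closes a loop inside the region already enclosed by the partial DFS. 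Propagating this local fact through the recursive structure of $T'$ yields condition 3; formalizing the local planarity step is the core technical issue, while the rest of the proof is bookkeeping and standard DFS reasoning.
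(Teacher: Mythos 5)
Your treatment of Conditions 1 and 2 matches the paper's proof essentially step for step: back-edges in a DFS go to strict ancestors (Condition 1), and if all back-edges out of the subtree at $t$ land no higher than $t$'s parent, that parent is a cut vertex (Condition 2). The paper phrases Condition 2 slightly more tersely (``any path linking $t$ and the root goes through the parent of $t$'') but it is the same argument; your extra sentence ruling out escapes to strict ancestors of the parent is a welcome clarification.

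There is a genuine gap in your argument for Condition 3, and it is not only in the spot you flag. You reduce the claim to a \emph{local} fact — that when a single vertex of $T'$ carries both a back-edge to $t$ and a back-edge to a strict ancestor of $t$, the latter comes clockwise before the former — and then assert that ``propagating this local fact through the recursive structure of $T'$'' finishes the job. But the two back-edges that Condition 3 compares generally sit at \emph{different} vertices of $T'$, possibly in disjoint subtrees or with one an ancestor of the other, so the local fact at a single vertex never applies to them directly. The ``propagation'' step is precisely where the planarity has to be used globally, and it is not reducible to DFS bookkeeping: one has to say why, once a back-edge from some $w$ to $t$ exists, no vertex of $T'$ that is still to be discovered can carry a back-edge to a strict ancestor of $t$. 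The paper's proof does exactly this in one stroke: the back-edge corresponding to $\ell$ (labeled $p$) together with the tree path from $\ell$ up to $t$ bounds a face cycle, and every leaf of $T'$ preceding $\ell$ in traversal order (equivalently, discovered after $\ell$ in the clockwise DFS) lies strictly inside the region enclosed by that cycle; a back-edge from such a leaf to a strict ancestor of $t$ would have to cross the cycle, contradicting planarity of $M$. So the correct argument is a single global Jordan-curve/non-crossing statement about a specific cycle, not a per-vertex ordering that you then propagate. I would replace the ``local fact plus propagation'' paragraph by this cycle-enclosure argument.

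As a minor point, your intermediate claim that the counter-clockwise traversal order of leaves is exactly the reverse of the clockwise DFS discovery order is correct and is implicitly used by the paper; stating it explicitly, as you do, is fine, but it does not by itself do the planar work needed for Condition 3.
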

\begin{proof}
Let $M \in \mathcal{M}_n$ be a non-separable planar map with $n+1$ edges. It is clear that $\mathrm{T}(M)$ has $n$ edges. We suppose that the root of $M$ points from $v$ to $u$. We need to check the three conditions of decorated trees on $\mathrm{T}(M)$. For the first condition, let $\ell$ be a leaf adjacent to a node $t$ of depth $p$ in $\mathrm{T}(M)$, resulting from the exploration of the edge $e = \{t, t'\}$. It follows from the exploration order that $t'$ is a vertex of $M$ that was not completely explored when $e$ was visited. Thus, $t'$ is an ancestor of $t$ in $\mathrm{T}(M)$, and the label of $\ell$ is strictly smaller than $p$. For the second condition, let $t$ be a node of $\mathrm{T}(M)$ of depth $p>0$. If all leaves in the sub-tree induced by $t$ have labels at least $p-1$, then any path linking $t$ and the root vertex $v$ goes through the parent of $t$, making it a cut vertex, which is forbidden. For the third condition, let $t$ be a node of depth $p$ and $T'$ one of the sub-trees rooted at a child of $t$, and suppose that there is a leaf $\ell$ labeled $p$ in $T'$. By the exploration order, the cycle formed by the edge corresponding to $\ell$ and the path from $\ell$ to $t$ in $\mathrm{T}(M)$ encloses all leaves in $T'$ coming before $\ell$. Therefore, any leaf coming before $\ell$ in $T'$ cannot have a label strictly less than $p$, or else there will be a crossing in $M$ that makes it not planar. With all conditions satisfied, $\mathrm{T}(M)$ is a decorated tree. 
\end{proof}

\begin{prop}\label{inclusionSTnintoMn} 
$\mathrm{S}(\mathcal{T}_n) \subset \mathcal{M}_n$.
\end{prop}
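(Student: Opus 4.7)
The plan is to verify three properties of $\mathrm{S}(T)$ for an arbitrary $T \in \mathcal{T}_n$: that the procedure is well-defined and produces a planar map, that the resulting map has $n+1$ edges, and that it is non-separable.

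Well-definedness is a direct consequence of condition 1. At each step of $\mathrm{S}$, we reattach a leaf $\ell$ of label $p$ whose parent $t$ has some depth $p'$; since $p < p'$, the ancestor $s$ of depth $p$ exists as a genuine ancestor of $\ell$ in the tree (when $p \geq 0$) or as the new root vertex $v$ (when $p = -1$). For planarity, I would argue by induction on the number of leaves already processed, in the prescribed reverse-traversal (clockwise) order, that the current partial drawing is a planar map and that the still-unprocessed leaves lie on the boundary of a single face in traversal order. The reattachment of $\ell$ to the prescribed corner of $s$ preserves this invariant thanks to condition 3: applied to the subtree $T'$ rooted at the child of $s$ containing $\ell$, it forces every leaf of $T'$ preceding $\ell$ in traversal order to carry a label at least $p$, so the back edges that these leaves will later produce remain inside the region bounded by the newly-drawn edge from $t$ to $s$ and cannot create crossings. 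The edge count is immediate: $T$ contains $n$ edges, all of which are preserved (only one endpoint of each leaf-edge is relocated), and step 1 adds a single edge, so $\mathrm{S}(T)$ has $n+1$ edges.

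The main obstacle is establishing non-separability. I would show that removing any single vertex $w$ from $M := \mathrm{S}(T)$ leaves a connected graph; since $M$ is connected and has at least two edges, this rules out every cut vertex and hence every edge-separation in the sense of the excerpt. If $w = v$, the remainder contains the entire spanning tree $T$ and is clearly connected. If $w = u$, then condition 2 applied to each child $w'$ of $u$ (which has depth $1$) delivers a descendant leaf of $w'$ labeled at most $-1$, i.e.\ a free leaf, whose back edge in $M$ links the subtree of $w'$ directly to $v$; hence every child subtree of $u$ remains connected to $v$ in $M - u$. Finally, if $w$ is an internal node of depth $p - 1 \geq 1$, condition 2 applied to each child $w'$ of $w$ (of depth $p \geq 2$) supplies a descendant leaf labeled at most $p - 2$, producing in $M$ a back edge from the subtree of $w'$ to an ancestor of depth at most $p - 2$, that is, to a vertex strictly above $w$. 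Every child subtree of $w$ therefore remains connected in $M - w$ to the chain of proper ancestors of $w$, which is itself connected through tree edges up to $u$ and through the root edge to $v$. In all cases $M - w$ is connected, so $M$ has no cut vertex and is non-separable. Combining with the previous paragraph, $\mathrm{S}(T) \in \mathcal{M}_n$.
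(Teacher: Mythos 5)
Your proof is correct and follows essentially the same strategy as the paper: planarity is argued from conditions 1 and 3, and non-separability from condition 2 via a cut-vertex argument (the paper assumes a cut vertex $t$ and derives a contradiction, while you show connectivity of $M-w$ for each $w$, which is the contrapositive; both arguments tacitly use that condition 1 excludes loops so that the two notions of cut vertex coincide). Your version is a bit more explicit in separating the cases $w=v$, $w=u$, and $w$ of depth at least $1$, but the underlying idea matches the paper's.
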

\begin{proof}
Let $T \in \mathcal{T}_n$ be a decorated tree with $n$ edges. It is clear that $\mathrm{S}(T)$ is a map with $n+1$ edges. We first prove that $\mathrm{S}(T)$ is a non-separable planar map. It is not difficult to show from the first and the third conditions of the definition of decorated trees and from the definition of $\mathrm{S}$ that $\mathrm{S}(T)$ is planar. We suppose that $\mathrm{S}(T)$ is separable and $t$ is a cut vertex. We cannot have $t=v$ since $T$ is already a connected spanning tree of all vertices besides $v$, therefore $t$ is a vertex of $T$. Suppose that $t$ has depth $p$. We consider the connected component of $\mathrm{S}(T)$ containing $v$ after removing $t$. It must contain all vertices that are not descendants of $t$. Therefore, for at least one sub-tree of $t$ rooted at a child $t'$ of $t$, there is no leaf linking to ancestors of $t$, or equivalently this sub-tree only contains leaves with labels greater than or equal to $p$, which violates the definition of decorated trees since $t'$ has depth $p+1$. 
\end{proof}

\begin{prop}\label{compositionST}
  For any non-separable planar map $M$, we have $\mathrm{S}(\mathrm{T}(M)) = M$.
\end{prop}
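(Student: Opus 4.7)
The plan is to verify that $\mathrm{S}(\mathrm{T}(M))$ and $M$ agree as rooted planar maps by matching, in turn, the vertex set, the root, the tree edges of the DFS spanning tree, and finally the non-tree edges together with the cyclic order of edges around each vertex.

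First I would handle the skeleton. By construction, the internal nodes of $\mathrm{T}(M)$ are exactly the vertices of $M$ other than the root vertex $v$, and the edges between internal nodes are exactly the tree edges of the DFS spanning tree. Step~1 of $\mathrm{S}$ re-creates the missing vertex $v$ and the root edge by attaching a new vertex to the root $u$ of the tree and rooting the new edge from $v$ to $u$; this matches $M$. Step~2 of $\mathrm{S}$ never alters internal edges, so all DFS-tree edges are preserved with their original endpoints and positions.

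Next I would treat the endpoints of back-edges. For each leaf $\ell$ of $\mathrm{T}(M)$ with parent $t$ of depth $p_t$ and label $p$, the exploration rule says that $\ell$ was created when the DFS, while processing $t$, encountered an edge of $M$ to the previously visited ancestor $s$ of $t$ at depth $p$; this ancestor is uniquely determined, and is the same vertex $\mathrm{S}$ finds by climbing $p_t-p$ levels from $t$. So the new edge added by $\mathrm{S}$ in place of $\ell$ has exactly the same two endpoints as the corresponding back-edge of $M$. This is the easy half.

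The substantive step is checking the cyclic order at every vertex. At the parent $t$ of a leaf, the position is automatic: $\mathrm{S}$ keeps $\ell$ where it is, and the DFS (being clockwise around $t$) had originally recorded the back-edge at that cyclic position. The real content lies at the far endpoint $s$, where $\mathrm{S}$ inserts the new edge just after $e = \{s, c\}$ in clockwise order, with $c$ the child of $s$ whose subtree contains $\ell$. I would argue, using planarity together with the clockwise DFS convention, that in $M$ every back-edge from a descendant of $c$ to $s$ necessarily attaches at $s$ in the angular sector immediately following $e$ and preceding the next neighbor of $s$ in clockwise order: otherwise the cycle formed by such a back-edge and the tree path from $s$ down to its other endpoint would force a crossing with $e$ or with an edge at $s$ lying outside this sector. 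To pin down the order among multiple back-edges landing in the same sector, I would use that $\mathrm{S}$ processes leaves in reverse traversal order (hence clockwise), matching the order in which those back-edges were discovered at their tree ends during the DFS.

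I expect this last planarity step to be the main obstacle, since it is the only place where the local tree data genuinely have to be reassembled into the correct global cyclic embedding. A clean way to discharge it is by induction on the number of leaves already processed by $\mathrm{S}$ (say, in the order used by the algorithm), with the invariant that the partially reconstructed map is planar and coincides with the submap of $M$ obtained by keeping the spanning tree plus exactly the back-edges treated so far; the inductive step then reduces to the local angular-sector argument sketched above.
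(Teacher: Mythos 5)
Your overall structure mirrors the paper's: both reduce to verifying the cyclic order of edges around each vertex, and both zero in on the ancestor endpoint $s$ of each back-edge as the place where something nontrivial must be checked. The gap is in the justification of the cyclic order at $s$.

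You argue that planarity (plus the clockwise DFS convention) forces every back-edge from $c$'s subtree to $s$ to land in the angular sector immediately after $e=\{s,c\}$, because otherwise ``the cycle formed by such a back-edge and the tree path from $s$ down to its other endpoint would force a crossing.'' That argument is valid when the intervening edge is a back-edge from $s$ to a proper ancestor (it would indeed have to cross the cycle), and the clockwise DFS convention correctly rules out any edge before $e$. But it does \emph{not} cover the case where the intervening edge is a tree edge $e'=\{s,c'\}$ to another child $c'$ of $s$. In that case the whole subtree rooted at $c'$ simply sits in the bounded face of the cycle $C$ (the back-edge together with the tree path through $c$), and there is no crossing at all: by the DFS ancestor property, every non-tree edge out of $c'$'s subtree goes to an ancestor of $c'$, i.e.\ to $s$ or to a proper ancestor of $s$; planarity forbids the latter (it would cross $C$), but edges to $s$ itself stay inside $C$ without any violation. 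What forbids this configuration is \emph{non-separability}: the subtree of $c'$ would then be attached to the rest of the map only through $s$, making $s$ a cut vertex. This cut-vertex argument is exactly what the paper's proof uses (``it cannot encompass other sub-trees rooted at a child of $t$, or else $t$ will be a cut vertex''), and your proof needs to invoke it; planarity alone is not enough at this step. The rest of your outline (matching the skeleton, identifying endpoints of back-edges from labels, ordering multiple back-edges in the same sector by the order $\mathrm{S}$ processes leaves, and the inductive packaging) is sound and consistent with the paper.
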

\begin{proof}
Using leaf labels, it is clear from the definitions that $\mathrm{S}(\mathrm{T}(M))$ is equal to $M$ as a graph, and we only need to show that they have the same cyclic order of edges around each vertex. Let $t$ be an internal node of depth $p$ in $\mathrm{T}(M)$. We consider its descendant leaves of label $p$ in one of its sub-trees $T'$ induced by a descendant edge $e$ adjacent to $t$. Let $\ell_i$ be such a leaf. When reconnecting, the new edge corresponding to $\ell_i$ should come before $e$ by construction of $\mathrm{T}(M)$, and it cannot encompass other sub-trees rooted at a child of $t$, or else $t$ will be a cut vertex (see the left part of Figure~\ref{fig:ST}). If there are multiple such leaves in $T'$, their order is fixed by planarity (see the right part of Figure~\ref{fig:ST}). The reasoning also works for the extra vertex $v$ that is not in $\mathrm{T}(M)$. There is thus only one way to recover a planar map from $\mathrm{T}(M)$, and we have $\mathrm{S}(\mathrm{T}(M)) = M$. 
\end{proof}
\begin{figure}[!htbp]
\centering
\includegraphics[page=10, scale=1]{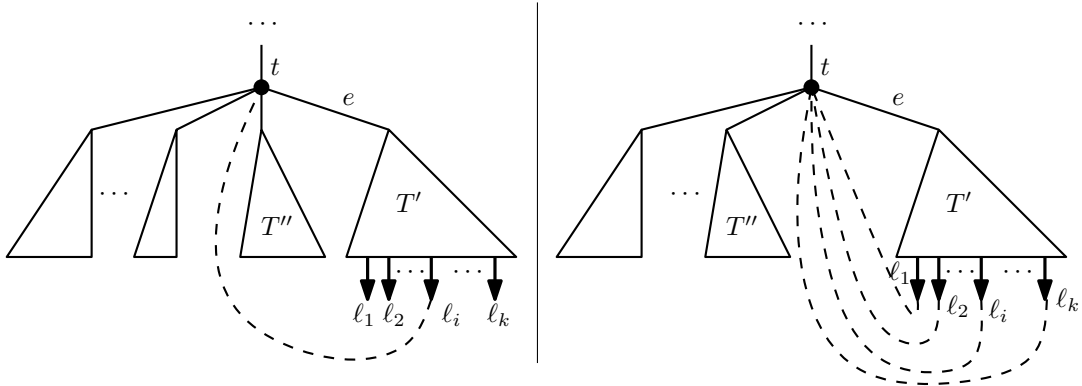}
\caption{Illustration of the proof of Prop.\ref{compositionST}} \label{fig:ST}
\end{figure}

\begin{prop}\label{compositionTS}
For any decorated tree $T$, we have $\mathrm{T}(\mathrm{S}(T)) = T$.
\end{prop}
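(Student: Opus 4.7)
The plan is to trace the depth-first exploration that defines $\mathrm{T}$ when applied to the map $M:=\mathrm{S}(T)$, and verify that it returns exactly the plane tree $T$ together with its labels. By Proposition~\ref{inclusionSTnintoMn}, $M\in\mathcal M_{n}$ so $\mathrm{T}(M)$ is well defined. The natural candidate for the DFS spanning tree is $\widetilde T$, consisting of the tree edges of $T$ together with the root edge $\{v,u\}$ added in Step~1 of $\mathrm{S}$; every other edge of $M$ is of the form $\{t,s\}$, where $t$ is the parent in $T$ of some leaf $\ell$ labeled $p$, and $s$ is the depth-$p$ ancestor of $\ell$, as prescribed by Step~2 of $\mathrm{S}$.

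I would then prove by induction on DFS order that the exploration from $v$ uses precisely the edges of $\widetilde T$ as tree edges, and visits the children of each internal node in the same order as in $T$. The key case is what happens at an internal node $t$ of depth $d$ when the exploration examines its incident edges clockwise. Beyond the tree edges of $\widetilde T$ at $t$, two kinds of non-tree edges appear: \emph{outgoing} edges created by leaves of $T$ whose parent is $t$, which link $t$ to a strict ancestor of $t$; and \emph{incoming} edges created by a leaf $\ell'$ lying in a strict descendant subtree $T'$ of $t$, whose label equals $d$, and which therefore link $t$ to some deeper internal node $t'\in T'$. Outgoing edges always point to already visited ancestors, so the clockwise DFS at $t$ immediately creates a leaf at $t$ with the correct label. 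For the incoming edges, the ``just after $e$'' prescription of $\mathrm{S}$ places such an edge at $t$ right after the tree edge $e$ leading into $T'$; hence when the DFS visits $t'$ \emph{before} returning to $t$, this edge is still unused and is consumed at $t'$, producing a leaf labeled $d$ there. Condition~3 in the definition of decorated trees, together with the reverse-traversal-order insertion in Step~2 of $\mathrm{S}$, guarantees that inside $T'$ the depth-$d$ leaves appear first in traversal order, which is precisely what is needed for the clockwise DFS order at $t'$ to be compatible with the cyclic order produced by $\mathrm{S}$.

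Once it is established that the DFS spanning tree equals $\widetilde T$ and that every non-tree edge of $M$ is consumed at its deeper endpoint, each leaf $\ell$ of $T$ reappears as a leaf of $\mathrm{T}(M)$ attached to its original parent $t$ and labeled with the depth of $s$, which is exactly its label $p$ in $T$. This yields $\mathrm{T}(\mathrm{S}(T))=T$. I expect the main obstacle to be the verification that incoming edges really are consumed at the deeper endpoint before the DFS returns to the shallower one; this is where condition~3 and the ``just after $e$'' rule interact non-trivially, and it is what makes condition~3 an essential feature of decorated trees rather than a cosmetic one.
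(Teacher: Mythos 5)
Your proposal is correct and takes essentially the same route as the paper: it verifies that the DFS exploration of $\mathrm{S}(T)$ reproduces the tree $T$ with its labels. The paper compresses this into a two-sentence inductive remark (close the leaves one at a time, in the order prescribed by $\mathrm{S}$, and observe that the exploration tree is unchanged at each step), whereas you trace the DFS once over the finished map and sort the incident edges at each internal node into tree edges, outgoing non-tree edges, and incoming non-tree edges; the mechanism — namely that the ``just after $e$'' insertion forces each non-tree edge to be consumed from its deeper endpoint before the exploration backtracks to the shallower one — is the same in both, and you simply make it explicit where the paper leaves it implicit.
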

\begin{proof}
Let $M = \mathrm{S}(T)$. We only need to show that the exploration tree $T'$ of $M$ is $T$ without labels. Closing each leaf one by one in the procedure $\mathrm{S}(T)$, it is clear that the exploration tree remains the same, therefore $T' = T$.
\end{proof}

\begin{thm}\label{bijectMnTn}
The transformation $\mathrm{T}$ is a bijection from the set of non-separable planar maps $\mathcal{M}_n$ to the set of decorated trees $\mathcal{T}_n$ for any $n>0$, and $\mathrm{S}$ is its inverse.
\end{thm}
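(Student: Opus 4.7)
The theorem is essentially a packaging result: all four ingredients have already been established in Propositions~\ref{inclusionTMnintoTn}, \ref{inclusionSTnintoMn}, \ref{compositionST}, and \ref{compositionTS}. My plan is simply to assemble them in the right order and observe that nothing else is needed.

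First, I would invoke Proposition~\ref{inclusionTMnintoTn} to assert that $\mathrm{T}$ restricts to a well-defined map $\mathcal{M}_n \to \mathcal{T}_n$, and Proposition~\ref{inclusionSTnintoMn} to assert that $\mathrm{S}$ restricts to a well-defined map $\mathcal{T}_n \to \mathcal{M}_n$. Since the constructions of $\mathrm{T}$ and $\mathrm{S}$ both preserve the number of edges (the exploration tree on a map with $n+1$ edges has $n$ tree edges, and closing all leaves of an $n$-edge decorated tree yields $n+1$ map edges including the added root), the two restrictions land in $\mathcal{T}_n$ and $\mathcal{M}_n$ respectively, matching the indices.

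Next, I would combine Propositions~\ref{compositionST} and~\ref{compositionTS}: for every $M \in \mathcal{M}_n$ we have $\mathrm{S}(\mathrm{T}(M))=M$, and for every $T \in \mathcal{T}_n$ we have $\mathrm{T}(\mathrm{S}(T))=T$. These two identities say exactly that $\mathrm{S}$ is a two-sided inverse of $\mathrm{T}$ when restricted to the sets $\mathcal{M}_n$ and $\mathcal{T}_n$. By standard elementary set theory, the existence of a two-sided inverse implies that $\mathrm{T}$ is bijective with inverse $\mathrm{S}$.

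I do not foresee any real obstacle: all the substantive content — the verification of the three decorated-tree conditions from non-separability, the proof that $\mathrm{S}$ produces a planar non-separable map, and the check that applying one construction after the other recovers the original object — has already been carried through in the preceding propositions. The only conceptual point worth emphasizing in the write-up is that the edge-count matching is built into the definitions, so the indices $n$ line up on both sides, ensuring we really obtain a bijection between $\mathcal{M}_n$ and $\mathcal{T}_n$ for each $n>0$ rather than just between the disjoint unions $\mathcal{M}$ and $\mathcal{T}$. A one-line concluding remark to that effect will finish the proof.
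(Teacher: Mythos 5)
Your proposal is correct and takes exactly the same route as the paper, which simply cites Propositions~\ref{inclusionTMnintoTn}, \ref{inclusionSTnintoMn}, \ref{compositionST}, and \ref{compositionTS} and concludes. Your extra remark about the edge counts lining up is a harmless expansion of what the paper leaves implicit.
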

\begin{proof}
This is a consequence of Propositions~\ref{inclusionTMnintoTn},~\ref{inclusionSTnintoMn},~\ref{compositionST}~and~\ref{compositionTS}.
\end{proof}

\subsection{From trees to intervals}

We now construct a bijection from decorated trees to synchronized intervals. For a decorated tree $T$, we want to construct a synchronized interval $[\mathrm{P}(T), \mathrm{Q}(T)]$. For the upper path, we simply define $\mathrm{Q}(T)$ as the transformation from the tree $T$ to a Dyck path by taking the depth evolution in the tree traversal. The definition of $\mathrm{P}$ is more complicated. We need to define a quantity on leaves of the tree $T$ called the \emph{charge}. The transformation $\mathrm{P}$ takes the following steps.

\begin{enumerate}
\item Every leaf has an initial charge $0$. For each internal vertex $v$ of depth $p>0$, we add $1$ to the charge of the first leaf in its descendants (in traversal order) with label at most $p-2$. We observe that the total number of charges is exactly the number of internal vertices.
\item We perform a traversal of the tree to construct a word in $u,d$. When we first visit an internal edge, we append $u$ to the word. When we first visit a leaf with charge $k$, we append $ud^{1+k}$ to the word. We thus obtain the word $\mathrm{P}(T)$.
\end{enumerate}

\begin{figure}
\centering
\includegraphics[page=3, scale=0.9]{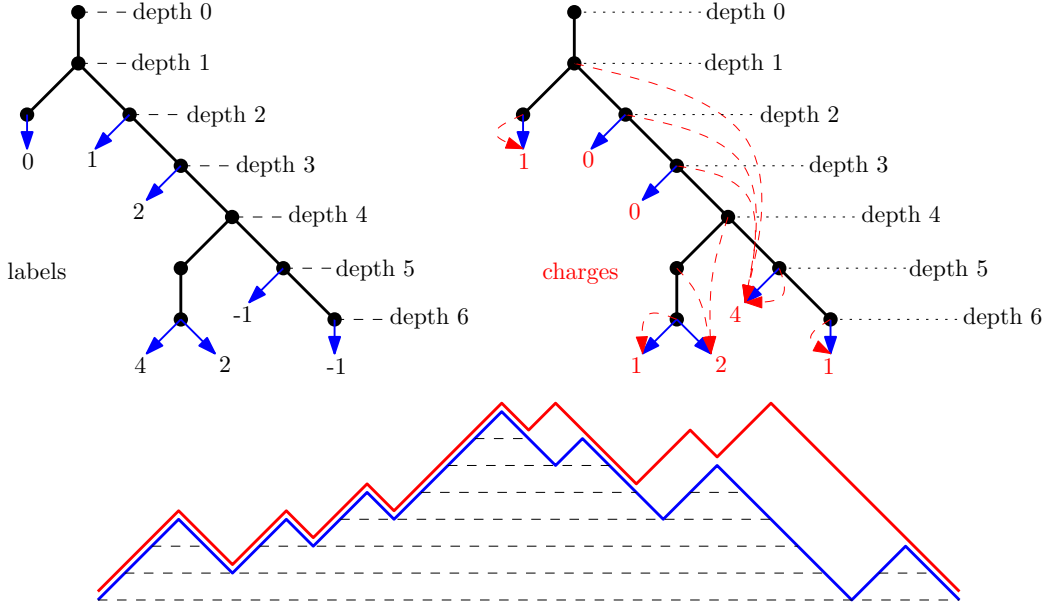}
\caption{An example of a decorated tree $T$, with the charges on its leaves and the corresponding interval $[\mathrm{P}(T),\mathrm{Q}(T)]$} \label{fig:bij-tree-interval}
\end{figure}

An example of the whole process is given in Figure~\ref{fig:bij-tree-interval}. We now prove that the transformations $\mathrm{P}$ and $\mathrm{Q}$ send a decorated tree to a synchronized interval.

\begin{prop} \label{prop:PQ-conform}
For a decorated tree $T$, the paths $\mathrm{P}(T)$ and $\mathrm{Q}(T)$ are Dyck paths, and $[\mathrm{P}(T), \mathrm{Q}(T)]$ is a synchronized interval.
\end{prop}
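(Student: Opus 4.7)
The plan is to verify four things: (i) $\mathrm{Q}(T)$ is a Dyck path, (ii) $\mathrm{P}(T)$ is a Dyck path, (iii) $\mathrm{P}(T)$ and $\mathrm{Q}(T)$ have the same type, and (iv) $\mathrm{P}(T)\le\mathrm{Q}(T)$ in the Tamari order. Claims (i) and (iii) are essentially formal: $\mathrm{Q}(T)$ is the classical depth-encoding of a plane tree, and the $k$-th up step of $\mathrm{P}(T)$ and of $\mathrm{Q}(T)$ both arise from descending to the same non-root vertex $v_k$ in the DFS, so the letter immediately after this $u$ is another $u$ precisely when $v_k$ is internal, which makes the two types agree letter by letter.

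For (ii), counting gives that $\mathrm{P}(T)$ has equally many $u$'s and $d$'s (both equal to $n$), using that the total charge equals the number of non-root internal vertices. For prefix non-negativity, I will index the leaves $\ell_1,\dots,\ell_L$ of $T$ in traversal order and set $F_j$ to be the number of internal descents taken in $\mathrm{P}(T)$ just before reaching $\ell_j$, and $c_j$ to be the charge of $\ell_j$. A direct computation shows that after the $(1+c_j)$-block of down steps at $\ell_j$ the height of $\mathrm{P}(T)$ equals $\sum_{i\le j}(F_i-c_i)$, so non-negativity reduces to the prefix inequality $\sum_{i\le m} F_i \ge \sum_{i\le m} c_i$. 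The key observation is that each non-root internal vertex $w$ is counted exactly once on each side: it contributes to $F_{k(w)}$, where $k(w)$ is the index of the first leaf of $T_w$, and to $c_{j(w)}$, where $j(w)$ is the index of the first leaf of $T_w$ whose label is at most $\mathrm{depth}(w)-2$ (such a leaf exists by condition~2). Since $k(w)\le j(w)$, the required inequality follows.

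For (iv), by Lemma~\ref{ericmirlouis} it suffices to show $D_{\mathrm{P}(T)}(k) \le D_{\mathrm{Q}(T)}(k)$ for every $k$. The case where $v_k$ is a leaf is trivial (both sides are $1$). For internal $v_k$ we have $D_{\mathrm{Q}(T)}(k) = 2e(v_k)+1$, where $e(v_k)$ is the number of edges in $T_{v_k}$. The main step is to show that the matching $d$ of $u_k$ in $\mathrm{P}(T)$ lies inside the processing of $T_{v_k}$. I will introduce $\Phi(v_k) := i(T_{v_k})-\sum_{\ell\in T_{v_k}} c(\ell)$, which is precisely the net height change across $T_{v_k}$'s processing starting from just before $u_k$. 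Splitting the sum of charges into contributions from internal vertices inside $T_{v_k}$ (which give exactly $i(T_{v_k})$, since every such vertex has depth $\ge 1$ and contributes a charge to some leaf within $T_{v_k}$) plus non-negative contributions from strict ancestors of $v_k$ whose charge lands inside $T_{v_k}$, I obtain $\Phi(v_k)\le 0$. Hence the height returns to its pre-$u_k$ value during $T_{v_k}$'s processing, so the matching $d$ occurs there. Since $T_{v_k}$'s processing contains only $e(v_k)$ up steps in total, there are at most $e(v_k)$ up steps strictly between $u_k$ and its match, whence $D_{\mathrm{P}(T)}(k) \le 2e(v_k)+1 = D_{\mathrm{Q}(T)}(k)$.

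The main obstacles are the prefix inequality in (ii) and the localization of the match in (iv). Both reduce to clean double-counting arguments once the right bookkeeping—via $k(w)\le j(w)$ for (ii) and via $\Phi(v_k)\le 0$ for (iv)—is set up; the remaining verifications are essentially mechanical.
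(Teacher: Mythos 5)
Your proof is correct, and for part (iv) — the Tamari comparison $D_{\mathrm{P}(T)} \le D_{\mathrm{Q}(T)}$ — you take a genuinely different and cleaner route than the paper. For (i)--(iii) you do essentially what the paper does, modulo bookkeeping: for Dyck-positivity the paper exhibits a direct $u$--$d$ pairing (each down-step created by a charge from a vertex $w$ is paired with the up-step of the edge from $w$ to its parent, which always precedes it), whereas you compute the running height after each leaf and double-count via $k(w)\le j(w)$; these are the same idea in two dresses.

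The real divergence is in (iv). The paper pins down the matching down-step $d_j^P$ of $u_i^P$ very precisely: it shows $d_j^P$ falls inside the $d$-block produced by the specific leaf $\ell$ that $v_k$ charges, and to rule out the match landing earlier (in the segment $W$ between $u_i^P$ and $\ell$) it invokes condition~3 of decorated trees. Your argument sidesteps all of this: by computing $\Phi(v_k)=i(T_{v_k})-\sum_{\ell\in T_{v_k}}c(\ell)\le 0$ (every internal vertex of $T_{v_k}$, being of positive depth, deposits its one charge onto a leaf of $T_{v_k}$, and ancestors of $v_k$ can only add more), you conclude directly that the height returns to its pre-$u_k$ level somewhere during $T_{v_k}$'s processing, and the bound $D_{\mathrm{P}(T)}(k)\le 2e(v_k)+1=D_{\mathrm{Q}(T)}(k)$ follows because that processing contains only $e(v_k)$ up-steps. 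This is shorter, and it exposes a structural fact the paper's proof obscures: the implication ``$T$ decorated $\Rightarrow [\mathrm{P}(T),\mathrm{Q}(T)]$ synchronized'' only requires condition~2 (which makes the charging well-defined); conditions~1 and~3 are not needed here and are instead what make the map $[\mathrm{P},\mathrm{Q}]$ injective with inverse $\mathrm{R}$, as used in Propositions~\ref{prop:R-conform} and~\ref{prop:PQ-injective}.
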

\begin{proof}
Since $\mathrm{Q}(T)$ is the depth evolution of the traversal of $T$, it is a well-defined Dyck path of length $2n$, where $n$ is the number of edges in $T$, which equals to the number of internal vertices of $T$. By the charging process, there are $n$ charges on leaves in total, and it is clear that $\mathrm{P}(T)$ is also of length $2n$ with $n$ up steps. We need to show that $\mathrm{P}(T)$ is positive. Consider a letter $d$ in $\mathrm{P}(T)$. The charge that gives rise to this letter $d$ comes from a non-root vertex $t$ and goes onto a descendant leaf $\ell$ of $t$. Let $e$ be the edge from $t$ to its parent. We pair up this letter $d$ to the letter $u$ in $\mathrm{P}(T)$ given by a traversal of $e$. All letters in $\mathrm{P}(T)$ can be paired up in this way, and by the traversal rule, in a pair, the letter $u$ always comes before the letter $d$ since $\ell$ is a descendant of $t$. Therefore, $\mathrm{P}(T)$ is positive, thus also a Dyck path. We can also easily see that $\mathit{Type}(\mathrm{P}(T)) = \mathit{Type}(\mathrm{Q}(T))$, since in both $\mathrm{P}(T)$ and $\mathrm{Q}(T)$, a letter $u$ is followed by a letter $d$ if and only if it corresponds to a leaf.

We now need to show that $[\mathrm{P}(T), \mathrm{Q}(T)]$ is a Tamari interval. Let $u_i^{Q}$ be the $i^{\rm th}$ up step in $\mathrm{Q}(T)$, and $e$ the edge in $T$ that gives rise to $u_i^{Q}$ in the construction of $\mathrm{Q}(T)$. By the definition of $\mathrm{P}$ and $\mathrm{Q}$, it is clear that $e$ also gives rise to the $i^{\rm th}$ up step $u_i^{P}$ in $\mathrm{P}(T)$. If we can show that $D_P(i) \leq D_Q(i)$ for all $i$, then by Lemma~\ref{ericmirlouis}, we know that $[\mathrm{P}(T), \mathrm{Q}(T)]$ is a Tamari interval.

Let $v$ be the lower endpoint of the edge $e$, $T'$ the sub-tree of $T$ rooted at $v$, $\ell$ the descendant leaf that $v$ charges and $p$ the depth of $v$. By the charging process, $\ell$ has a label at most $p-2$. Let $d_j^{P}$ be the matching down step of $u_i^{P}$ in $\mathrm{P}(T)$. We prove that $d_j^{P}$ is generated during the traversal of $\ell$, from which it follows that $D_P(i) \leq D_Q(i)$ by the definition of the distance function. Let $k \geq 1$ be the number of charges of $\ell$. Consider the segment $P(e,\ell) = u_i^{P}Wd^{k+1}$ of $P$ from $e$ to $\ell$ in the traversal for the construction of $\mathrm{P}(T)$. We first show that $|P(e,\ell)|_u \leq |P(e,\ell)|_d$, which implies that $d_j^P$ is in $P(e,\ell)$. To this end, we only need to show that we can always pair an up step in $P(e,\ell)$ with a down step also in $P(e,\ell)$. An up step is generated either by an internal edge or a leaf. For an up step $u_*$ generated by an internal edge $e'$ visited in $P(e,\ell)$, let $v'$ be its lower endpoint and $\ell'$ the leaf charged by $v'$. We know that $\ell' = \ell$ or $\ell'$ precedes $\ell$, therefore the down step $d_*$ produced by the charge added to $\ell'$ by the internal vertex $v'$ is already in $P(e,\ell)$. We pair $u_*$ with $d_*$. For an up step arisen from visiting a leaf, we can pair it with the first letter $d$ given by the leaf. Therefore, $|P(e,\ell)|_u \leq |P(e,\ell)|_d$. We now show that $d_j^{P}$ is not in $W$. Indeed, since $T$ is a decorated tree and $\ell$ has a label at most $p-2$, by the third condition of decorated trees, a descendant leaf $\ell'$ of $v$ that precedes $\ell$ has a label at least $p-1$. All charges of $\ell'$ come from internal nodes in $T'$ other than $v$ and they are all ancestors of $\ell'$, which means that they are visited before $\ell'$ in the traversal. Therefore, there are more $u$'s than $d$'s in any prefix of $W$, so $d_j^{P}$ cannot be in $W$, thus it must be among the down steps $d^{k+1}$ produced by $\ell$, which completes the proof.
\end{proof}

We now describe the inverse transformation $\mathrm{R}$, which sends a synchronized interval $[P,Q]$ to a decorated tree $T = \mathrm{R}([P,Q])$ by the following steps. A partial example is illustrated in Figure~\ref{fig:bij-tree-interval-leaf}. We should note that the following definition of $\mathrm{R}$ does not use the notion of charges in the definition of $\mathrm{P}$. However, we will use the notion of charges to prove that $\mathrm{R}$ is indeed the inverse transformation of $[\mathrm{P},\mathrm{Q}]$. More precisely, we will show how to read off, from a synchronized interval, the vertices that charge a given leaf on the corresponding decorated tree without any knowledge on the labels.

\begin{enumerate}
\item We construct the tree structure of $T$ from $Q$.
\item We perform the following procedure on each leaf, as illustrated in Figure~\ref{fig:bij-tree-interval-leaf}. Let $\ell$ be a leaf. Suppose that $\ell$ gives rise to the $i^{\rm th}$ up step in $Q$. We look at the lowest point $u$ of the consecutive down steps that come after the $i^{\rm th}$ up step in $P$, and we draw a ray from $u$ to the left until intersecting the midpoint of two consecutive up steps in $P$. Suppose that the lower up step is the $j^{\rm th}$ up step in $P$. We take $e$ the edge in $T$ that gives rise to the $j^{\rm th}$ up step in $Q$. Let $p$ be the depth of the shallower end point of $e$. We label the leaf $\ell$ with $p$. In the case that no such intersection exists, $\ell$ is labeled $-1$.
\end{enumerate}

\begin{figure}[!htbp]
\centering
\includegraphics[page=13, scale=1]{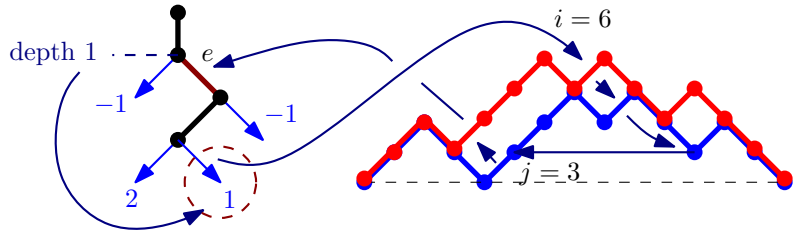}
\caption{An example of how to recover leaf labels using the lower path $P$ (here the leaf with label $1$)} \label{fig:bij-tree-interval-leaf}
\end{figure}

We start by a property of Tamari intervals, which is a corollary of Lemma~\ref{ericmirlouis}.

\begin{lem}\label{lem:sync-inc}
Let $[P,Q]$ be a Tamari interval. For $A \in \{P,Q\}$, we denote by $u_j^A$ the $j^{\rm th}$ up step in $A$, $d_j^A$ its matching down step, and $A^{[j]}$ the segment of $A$ between $u_j^A$ and $d_j^A$, excluding both ends. For any index $i, j$ such that the $i^{\rm th}$ up step $u_i^P$ of $P$ is in $P^{[j]}$, the $i^{\rm th}$ up step $u_i^Q$ in $Q$ is also in $Q^{[j]}$.
\end{lem}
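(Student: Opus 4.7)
The plan is to translate the geometric containment condition ``$u_i^A \in A^{[j]}$'' into a purely arithmetic condition on the distance function $D_A$, and then invoke Lemma~\ref{ericmirlouis} to transfer the inequality from $P$ to $Q$. The key observation is that the segment $A^{[j]}$ is itself a (possibly empty) Dyck path, balanced in $u$'s and $d$'s, and so the number of up steps strictly inside it is determined by $D_A(j)$.

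More precisely, I would first show that the up steps contained in $A^{[j]}$ are exactly $u_{j+1}^A, u_{j+2}^A, \ldots, u_{j+k}^A$ where $k = (D_A(j)-1)/2$. Indeed, if $u_j^A$ sits at position $a$ of the word $A$ and $d_j^A$ at position $b$, then the interior of $A^{[j]}$ has length $b - a - 1 = D_A(j) - 1$, which is even, and exactly half of those letters are $u$'s. Since the up steps inside $A^{[j]}$ form a consecutive block starting with $u_{j+1}^A$, this yields the equivalence
\[
u_i^A \in A^{[j]} \iff j < i \text{ and } 2(i-j) + 1 \leq D_A(j).
\]

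With this characterization in hand, the lemma follows directly: suppose $u_i^P \in P^{[j]}$, so $j < i$ and $2(i-j)+1 \leq D_P(j)$. By Lemma~\ref{ericmirlouis} applied to the Tamari interval $[P,Q]$, we have $D_P(j) \leq D_Q(j)$, hence $2(i-j)+1 \leq D_Q(j)$, which by the same characterization means $u_i^Q \in Q^{[j]}$.

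The only mildly delicate point is verifying the combinatorial equivalence above, which amounts to noting that $A^{[j]}$ is a Dyck sub-path whose up steps are indexed consecutively in the global numbering of $A$; this is routine and does not require any cleverness. There is no real obstacle: once the distance-function translation is in place, the lemma is a one-line consequence of Lemma~\ref{ericmirlouis}.
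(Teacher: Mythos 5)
Your proof is correct and follows essentially the same route as the paper's: observe that $A^{[j]}$ is a Dyck sub-path of size $(D_A(j)-1)/2$ whose up steps are consecutively indexed starting at $j+1$, turn membership in $A^{[j]}$ into an inequality against $D_A(j)$, and conclude via Lemma~\ref{ericmirlouis}. In fact your characterization $u_i^A \in A^{[j]} \iff j<i$ and $2(i-j)+1 \le D_A(j)$ is slightly more carefully indexed than the version printed in the paper, which has a harmless off-by-one in the upper bound.
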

\begin{proof}
We observe that, for any Dyck path $A$ and index $j$, the segment $A^{[j]}$ is a Dyck path of size $(D_A(j)-1)/2$. Since $A^{[j]}$ is formed by consecutive letters in $A$, the up steps in $A^{[j]}$ have consecutive indices starting from $j+1$. Therefore, $u_i^A$ is in $A^{[j]}$ if and only if $j+1 \leq i \leq j+1+(D_A(j)-1)/2$. By Lemma~\ref{ericmirlouis}, $D_P(j) \leq D_Q(j)$. Therefore, $u_i^P$ is in $P^{[j]}$ implies that $u_i^Q$ is in $Q^{[j]}$.
\end{proof}

In the following proofs, for a tree $T$, the sub-tree induced by an edge $e$ is the sub-tree obtained by cutting $e$.

\begin{prop}\label{prop:R-conform}
For $[P,Q]$ a synchronized interval, the tree $T = \mathrm{R}([P,Q])$ is a decorated tree.
\end{prop}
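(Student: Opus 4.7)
The plan is to verify the three defining conditions of a decorated tree for $T = \mathrm{R}([P,Q])$. The underlying (unlabeled) tree of $T$ comes from $Q$ via depth evolution and is automatically a valid rooted plane tree with $n$ edges, so everything reduces to checking that the labels produced in Step~2 of $\mathrm{R}$ obey Conditions 1--3. I expect Condition~3 to be the main obstacle, with Lemma~\ref{lem:sync-inc} as the recurring workhorse that transfers subpath containment from $P$ to $Q$.

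For Condition~1, fix a leaf $\ell$ associated to the $i$-th up step $u_i^Q$, and let $q$ denote its starting height in $Q$, which equals the depth of $\ell$'s parent. If no midpoint is encountered by the ray, the label is $-1 < q$ and there is nothing to prove. Otherwise, I would unpack the geometry: the ray emanates from the valley $u$ sitting at height $(\text{start of } u_i^P) + 1 - r$ with $r \geq 1$, and terminates at a midpoint of consecutive up steps $u_j^P, u_{j+1}^P$ of $P$. This forces $u_j^P$ to be immediately followed by $u_{j+1}^P$ with $u_j^P$ starting strictly below $u_i^P$. Because $\mathrm{Type}(P) = \mathrm{Type}(Q)$, the same $uu$-pattern occurs at positions $j, j+1$ in $Q$, so $u_j^Q$ and $u_{j+1}^Q$ differ in starting height by $1$. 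Combining this with Lemma~\ref{lem:sync-inc} applied to the enclosing subpath then gives that the starting height of $u_j^Q$, which is the label $p$, is strictly less than $q$.

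For Condition~2, let $w$ be an internal vertex of depth $p > 0$, corresponding to the $k$-th up step $u_k^Q$ at height $p-1$ in $Q$. I would take the first descendant leaf $\ell^*$ of $w$ in traversal order and analyze the ray from the valley below its associated $u_{i^*}^P$. By Lemma~\ref{lem:sync-inc}, $u_{i^*}^P$ lies in $P^{[k]}$, so the ray first descends strictly below the top of $u_k^P$. Following the ray to the left then shows that either it escapes the subpath $P^{[k]}$ and hits a midpoint whose lower up step is at height at most $p-2$, giving a label $\leq p-2$; or it fails to find any midpoint, giving label $-1$. Either outcome supplies a descendant leaf of $w$ whose label is at most $p-2$.

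For Condition~3, let $t$ be a node of depth $p$, $T'$ a subtree rooted at a child of $t$, and $\ell$ a leaf in $T'$ labeled $p$; the task is to show every leaf $\ell'$ preceding $\ell$ in the traversal of $T'$ has label at least $p$. The label $p$ of $\ell$ means its ray crosses a specific midpoint $u_j^P u_{j+1}^P$ at height $p$. Using Lemma~\ref{lem:sync-inc} in the other direction, I would locate the associated $u_{i(\ell')}^P$ strictly inside the subpath after $u_{j+1}^P$ in $P$, so that the ray emanating from $\ell'$ cannot escape past $u_{j+1}^P$ and therefore cannot reach any midpoint at height strictly below $p$, forcing its label to be $\geq p$ (or $-1$ is ruled out by the same enclosure). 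The delicate part here is correctly pinpointing the ``gateway'' up step $u_{j+1}^P$ and carefully reconciling the traversal order of leaves in $T_Q$ with the left-to-right geometry of rays in $P$; this coherence between the tree combinatorics on the $Q$-side and the geometry on the $P$-side is what I expect to be the main obstacle of the proof.
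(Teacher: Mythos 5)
Your high-level plan — verify Conditions 1--3 using Lemma~\ref{lem:sync-inc} to transfer containment from $P$ to $Q$ — is indeed the structure of the paper's proof, but two of your three arguments have genuine gaps.

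For Condition~2, you propose to take the \emph{first} descendant leaf $\ell^*$ of $w$ in traversal order and argue that its ray in $P$ descends strictly below the top of $u_k^P$. This is not true in general: if, say, $w$ has a leaf child $\ell^*$ whose up step in $P$ is immediately followed by a single down step, the valley of $\ell^*$ sits exactly at the height of the top of $u_k^P$, and the ray then stops at the double-up $u_k^P u_{k+1}^P$, producing the label $p-1$, not $\leq p-2$. The paper instead picks a leaf dictated by $P$, not by the tree: it takes $d_j^P$ (the match of $u_k^P$, in your notation), extends the maximal run of consecutive down steps containing $d_j^P$ back to its first step $d_i^P$, and lets $\ell$ be the leaf associated with the matching up step $u_i^P$. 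By construction the valley of this $\ell$ lies strictly below the endpoint of $u_j^P$, so the ray genuinely escapes the subpath $P^{[j]}$; Lemma~\ref{lem:sync-inc} (used in its stated direction, from $P$-containment to $Q$-containment) then places the labeling edge strictly above $e$ in the tree. Selecting the correct leaf is the crux of this condition and cannot be replaced by the leftmost one.

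A second recurring issue is that both in Condition~2 (\emph{``By Lemma~\ref{lem:sync-inc}, $u_{i^*}^P$ lies in $P^{[k]}$''}) and Condition~3 (\emph{``Using Lemma~\ref{lem:sync-inc} in the other direction...''}) you invoke the lemma to pass from $Q$-containment to $P$-containment. But the lemma only gives the implication $u_i^P \in P^{[j]} \Rightarrow u_i^Q \in Q^{[j]}$; the converse is false precisely because $D_P(j) \leq D_Q(j)$, so $P^{[j]}$ may be strictly shorter. Relatedly, you identify the height of the midpoint $u_j^P u_{j+1}^P$ with the tree-depth $p$; this is only valid in $Q$, not in $P$. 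The paper avoids both pitfalls: for Condition~3 it argues on the side of $P$ alone, showing that any earlier leaf $\ell_1$ in $T'$ must have its valley at height at least that of $\ell$'s ray (otherwise $\ell$'s ray would be blocked), which forces $\ell_1$'s labeling edge to lie inside $T'$ or equal $e$, hence label $\geq p$. For Condition~1 the paper sidesteps geometry entirely by first checking the case $P=Q$ and then using that lowering $P$ in the Tamari order only decreases labels — a monotonicity argument rather than the type-and-sync-inc computation you sketch. Your strategy is recoverable, but you would need to (i) identify the correct leaf in Condition~2 from $P$'s down runs, and (ii) replace every ``$Q$-to-$P$'' use of Lemma~\ref{lem:sync-inc} by a direct geometric argument in $P$.
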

\begin{proof}
We need to verify that $T$ satisfies the three conditions of decorated trees. We first look at the first condition for the case $P=Q$. In this case, to show that the label of a leaf $\ell$ attached to a vertex $u$ of depth $p$ is strictly smaller than $p$, we consider the next newly visited edge $e$ after $\ell$ in the tree traversal. If no such $e$ exists, $\ell$ is labeled $-1$. Let $v$ be the vertex adjacent to $e$ with smaller depth, and $v$ must be an ancestor of $\ell$. In the case $P=Q$, the label of $\ell$ is the depth of $v$ minus $1$, which must be strictly smaller than $p$. The first condition is also satisfied for any other $P$ since going down in the lattice weakly reduces the labels in $\mathrm{R}([P,Q])$. Therefore, the first condition is satisfied for all $\mathrm{R}([P,Q])$.

For the second condition, let $v$ be an internal node of depth $p$ in $T$ that is not the root and $e$ the edge from $v$ to its parent. We need to exhibit one descendant leaf of $v$ that has a label of value at most $p-2$. Suppose that $e$ corresponds to the $j^{\rm th}$ up step $u_j^Q$ in $Q$. On the path $P$, let $u_j^P$ be the $j^{\rm th}$ up step and $d_j^P$ be the matching down step. Let $d_i^P$ be the first down step of the consecutive down steps containing $d_j^P$, and $u_i^P$ be its matching up step, which is also the $i^{\rm th}$ up step of $P$. It is clear that $u_i^P$ is between $u_j^P$ and $d_j^P$ in $P$. Let $\ell$ be the leaf that gives rise to $u_i^Q$. We now show that the label of $\ell$ is at most $p-2$. Since $[P,Q]$ is a synchronized interval, thus also a Tamari interval, by Lemma~\ref{lem:sync-inc}, the $i^{\rm th}$ up step $u_i^Q$ of $Q$ must be between the $j^{\rm th}$ up step $u_j^Q$ and its matching down step in $Q$. Furthermore, the edge $e$ gives rise to $u_j^Q$, therefore $\ell$ must be a descendant leaf of $e$. Consider the lowest down step $d_k^P$ corresponding to $\ell$ in $P$. Let $e'$ be the edge that gives label to $\ell$. By the definition of $\mathrm{R}$, the segment of $P$ between the corresponding up step of $e'$ and $d_k^P$ contains $u_j^P$, which implies that the edge $e'$ induces a sub-tree containing $e$ (or the entire tree when the label is $-1$) according to Lemma~\ref{lem:sync-inc}. Let $w$ be the endpoint with a smaller depth of $e'$. The depth of $w$ is thus at most that of $v$ minus $2$, therefore the label of $\ell$ is at most $p-2$. The tree $T$ thus satisfies the second condition.

For the third condition, let $\ell$ be a leaf in $T$ with a label $p$. We consider the edge $e$ that gives a label to $\ell$ in the construction of $T$. The edge $e$ links two vertices of depth $p$ and $p+1$. Therefore, its induced sub-tree $T'$ is one of the sub-trees of a vertex $v$ of depth $p$ as in the third condition. The leaf $\ell$ is in $T'$ by the construction of $T$ and Lemma~\ref{lem:sync-inc}. We only need to prove that there is no leaf with label strictly less than $p$ before $\ell$ in $T'$. Let $\ell_1$ be a leaf in $T'$ that comes before $\ell$. We suppose that $\ell$, $e$ and $\ell_1$ give rise to the $i^{\rm th}$, $j^{\rm th}$ and $i_1^{\rm th}$ up step in $Q$ respectively. We have $j < i_1 < i$. We now look at the corresponding up steps in $P$. By construction of the horizontal ray, the lowest point of the consecutive down steps that comes after the $i_1^{\rm th}$ up step in $P$ cannot be lower than that of the $i^{\rm th}$ step, or else the ray would be blocked. Therefore, $\ell_1$ receives a label from an edge in $T'$ or from $e$, which give it a label at least $p$. The third condition is thus satisfied, and we conclude that $T$ is a decorated tree.
\end{proof}

We now show that the transformations $[\mathrm{P}, \mathrm{Q}]$ and $\mathrm{R}$ are inverse of each other. 

\begin{prop}\label{prop:inverse-R}
Let $[P,Q]$ be a synchronized interval and $T = \mathrm{R}([P,Q])$, we have $\mathrm{P}(T)=P$ and $\mathrm{Q}(T) = Q$.
\end{prop}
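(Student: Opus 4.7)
My plan is to treat the two equalities separately.

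The equality $\mathrm{Q}(T) = Q$ is immediate from the construction of $\mathrm{R}$: its first step builds the tree structure of $T = \mathrm{R}([P,Q])$ from $Q$ via the standard depth-evolution bijection between Dyck paths and plane trees, and $\mathrm{Q}$ is defined as the inverse of this bijection (the depth sequence along a tree traversal).

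For $\mathrm{P}(T) = P$, I would first observe that both paths have type $\mathrm{Type}(Q)$ (the former by Proposition~\ref{prop:PQ-conform} applied to $T$, combined with $\mathrm{Q}(T)=Q$), so the $i$-th up step of $P$ and of $\mathrm{P}(T)$ correspond to the same vertex of $T$. It therefore suffices to show, for every leaf $\ell$ of $T$ arising from the $i$-th up step, that the length $m$ of the maximal run of down steps immediately following $u_i^P$ in $P$ equals $1 + k_\ell$, where $k_\ell$ is the charge received by $\ell$ in $T$ under the rule defining $\mathrm{P}$.

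The key claim is the following: \emph{the internal vertices of $T$ that charge $\ell$ are exactly $v_{i_2}, \ldots, v_{i_m}$, where, for $k \geq 2$, $u_{i_k}^P$ is the up step in $P$ matched with the $k$-th down step in the run after $u_i^P$.} For each such $k$, the letter immediately after $u_{i_k}^P$ in $P$ must be another up step (otherwise $u_{i_k}^P$ would be matched with that adjacent down step rather than $d_k$), so $v_{i_k}$ is internal in $T$; moreover the matching sub-path $P^{[i_k]}$ contains $u_i^P$, so by Lemma~\ref{lem:sync-inc} the sub-path $Q^{[i_k]}$ contains $u_i^Q$, placing $v_{i_k}$ on the path from $\ell$ to the root of $T$. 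To verify that $v_{i_k}$ actually charges $\ell$, I would use the horizontal-ray description of $\mathrm{R}$ to check that $\mathrm{label}(\ell) \leq \mathrm{depth}(v_{i_k}) - 2$, and then invoke condition 3 of the decorated tree definition to show that $\ell$ is the first descendant leaf of $v_{i_k}$ with a label this small. Conversely, any internal ancestor $v$ of $\ell$ whose edge is \emph{not} matched by a down step in the run after $u_i^P$ must host an earlier descendant leaf eligible to be charged, so $v$ cannot charge $\ell$. Granted the claim, the count gives $k_\ell = m - 1$, so each leaf $\ell$ of $T$ produces the block $u d^{m}$ in $\mathrm{P}(T)$ matching the corresponding block in $P$, and $\mathrm{P}(T) = P$ follows.

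The main obstacle is the verification of the "first descendant leaf" criterion in both directions, which links the local geometry of the horizontal ray in $\mathrm{R}$ to the global constraints enforced by condition 3 of the decorated tree definition. The intuition is that the ray encodes precisely how high above $\ell$ in $T$ one must climb before reaching an edge whose closing down step in $P$ lies strictly beyond the run after $u_i^P$, while condition 3 prevents any earlier branch of such an ancestor from offering a competing candidate leaf with an equally small label. Making these interlocking properties rigorous in both directions is where the bulk of the case analysis lies.
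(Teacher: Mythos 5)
Your overall reduction is essentially the paper's: since $\mathrm{Q}(T)=Q$ is immediate and, by Proposition~\ref{prop:R-conform}, $[\mathrm{P}(T),\mathrm{Q}(T)]$ is a synchronized interval of the same type, it suffices to show each leaf receives the correct number of charges. The identification of the chargers of a leaf $\ell$ with the vertices whose up step in $P$ matches one of the non-initial down steps in $\ell$'s run, using the observation that such an up step sits at the bottom of a double-up and then applying Lemma~\ref{lem:sync-inc} to lift the nesting from $P$ to $Q$, is exactly the paper's argument, just bookkept per leaf instead of per vertex.

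However, there is a genuine gap in the step where you propose to ``invoke condition 3 of the decorated tree definition to show that $\ell$ is the first descendant leaf of $v_{i_k}$ with a label this small.'' Condition~3 only constrains leaves that come \emph{before} a leaf labeled \emph{exactly} the depth of some node $t$, within the sub-tree hanging from a fixed child of $t$; it says nothing about leaves whose labels are strictly smaller than that depth, and it does not rule out an earlier competing leaf $\ell'$ of $v_{i_k}$ with label $\le \operatorname{depth}(v_{i_k})-2$. (Concretely: applying condition~3 to the ancestor of $\ell$ at depth $\operatorname{label}(\ell)$ yields only that earlier leaves in that sub-tree have label $\ge \operatorname{label}(\ell)$, which is far weaker than the bound $\ge \operatorname{depth}(v_{i_k})-1$ you need.) The paper closes this step with a direct Dyck-path argument, not with the tree conditions: for a descendant leaf $\ell'$ of $v_{i_k}$ preceding $\ell$, the bottom of $\ell'$'s down-run in $P$ lies strictly between $u_{i_k}^P$ and $d_{i_k}^P$, so the horizontal ray from it lies strictly above the ray from the bottom of $\ell$'s run; hence the double-up it hits lies inside the segment $[u_{i_k}^P,d_{i_k}^P]$, which by Lemma~\ref{lem:sync-inc} corresponds to the sub-tree below $v_{i_k}$, forcing $\operatorname{label}(\ell')\ge \operatorname{depth}(v_{i_k})-1$. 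You should replace the appeal to condition~3 by this geometric argument. A secondary, minor point: your explicit ``converse'' step (showing no other ancestor charges $\ell$) is redundant once you know, for every non-root internal vertex, exactly which leaf it charges, since charging is a function on vertices and the total charge count equals the number of internal edges.
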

\begin{proof}
For $Q$ it is clear. We only need to prove the part for $P$. From Proposition~\ref{prop:R-conform}, we know that $[\mathrm{P}(T),\mathrm{Q}(T)]$ is a synchronized interval. Therefore, given the path $Q$, the Dyck path $\mathrm{P}(T)$ is totally determined by the charge of each leaf in $T$. We only need to show that each leaf in $T$ receives the correct amount of charge, which is one less than the length of the corresponding consecutive down steps in $P$.

We will first investigate the charging process. Let $v$ be a non-root vertex of depth $p$ in $T$, $e$ the edge linking $v$ to its parent, and $u_i^P$ the up step in $P$ that comes from $e$, which is also the $i^{\rm}$ up step of $P$. Let $\ell$ be the leaf that gives rise to the matching down step $d_i^P$ of $u_i^P$ in $P$. We now show that $v$ charges $\ell$ by showing that $\ell$ has a label of value at most $p-2$ and showing that $\ell$ has the first such label.

To show that the label of $\ell$ is at most $p-2$, we consider the labeling process on $\ell$. We draw a ray to the left from the lowest point of the down steps of $\ell$, which hits a double up step. Suppose that the lower one of the double up step is the $j^{\rm th}$ up step $u_j^P$ of $P$. Let $e'$ be the edge giving rise to $u_j^P$, and $v'$ the endpoint of $e'$ with a smaller depth. It is clear that $i \neq j$ and $u_i^P$ is in the segment of $P$ between $u_j^P$ and its matching down step. Therefore, by Lemma~\ref{lem:sync-inc}, $e$ must also be in the sub-tree induced by $e'$. Therefore, $v'$ is of depth at most $p-2$, thus $\ell$ has a label at most $p-2$. 

To show that $\ell$ is the first descendant leaf of $v$ with a label at most $p-2$, we consider a descendant leaf $\ell'$ of $v$ that comes before $\ell$ in the traversal order. Let $d_k^P$ be the last down steps in $P$ that comes from $\ell'$. Since $\ell'$ comes before $\ell$, $d_k^P$ is strictly between $u_i^P$ and $d_i^P$, and the horizontal ray from the lower point of $d_k^P$ lays strictly above that from $u_i^P$ to $d_i^P$. By the labeling process, the double up steps that corresponds to $d_k^P$ (thus to $\ell'$) is in the segment from $u_i^P$ to $d_i^P$ (both ends included), therefore the label of $\ell'$ is at least $p-1$. We conclude that $\ell$ is the first leaf in the sub-tree of $v$ that has a label at most $p-2$, and thus $v$ charges $\ell$.

To count the number of charges of $\ell$, we notice that each down step in $P$ that comes from $\ell$ corresponds to a charge, except for the highest one. To see this, we only need to consider their matching up steps. It is clear that highest down step of $\ell$ in $P$ is matched with the only up step in $P$ that comes from $\ell$. For a down step of $\ell$ that is not the highest, it is impossible that its matching up step is immediately followed by a down step, therefore the matching up step is the lower part of a double up step, corresponding to an internal vertex in $T$, and we can see from the argument above that this internal vertex charges $\ell$. We thus conclude that $\ell$ receives the correct number of charges in the construction of $\mathrm{P}(T)$, which implies $\mathrm{P}(T)=P$.
\end{proof}

We now show that the transformation $[\mathrm{P},\mathrm{Q}]$ is an injection.

\begin{prop}\label{prop:PQ-injective}
Let $T_1, T_2$ be two decorated trees. If $\mathrm{P}(T_1) = \mathrm{P}(T_2)$ and $\mathrm{Q}(T_1) = \mathrm{Q}(T_2)$, then $T_1 = T_2$.
\end{prop}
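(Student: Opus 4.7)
The plan is to prove the equivalent stronger statement $\mathrm{R}([\mathrm{P}(T),\mathrm{Q}(T)]) = T$ for every decorated tree $T$. Granting this, if $T_1$ and $T_2$ share the same image $[P,Q]$ under $[\mathrm{P},\mathrm{Q}]$ (a synchronized interval by Proposition~\ref{prop:PQ-conform}), then $T_1 = \mathrm{R}([P,Q]) = T_2$, giving the claim.

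Since $\mathrm{R}$ reads off its tree shape from $Q$ alone, and $\mathrm{Q}(T)$ is the depth evolution of a traversal of $T$ and hence uniquely encodes its unlabeled plane tree, the underlying trees of $T$ and $\mathrm{R}([\mathrm{P}(T),\mathrm{Q}(T)])$ coincide. It remains to verify that every leaf receives the same label. Fix a leaf $\ell$ of label $p$ in $T$ and let $P = \mathrm{P}(T)$. By the definition of $\mathrm{P}$, $\ell$ contributes the block $ud^{1+k_\ell}$ to $P$ where $k_\ell$ is its charge; the endpoint of this $d$-run, at some height $h$, is where the horizontal ray in the $\mathrm{R}$-procedure starts. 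The central claim is that the first $uu$-valley at height $h$ encountered by the ray corresponds, under the identification of the $j$-th up step in $P$ with the $j$-th edge visited in the traversal, to the edge $e$ joining the ancestor $v$ of $\ell$ at depth $p$ (on the root-to-$\ell$ path) to its child $v'$ on the same path. The definition of $\mathrm{R}$ then assigns $\ell$ the label $\operatorname{depth}(v) = p$, as desired. The free-leaf case ($p = -1$) is where no such ancestor exists and the ray correctly finds no $uu$-valley.

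The main obstacle is this geometric identification, which is essentially the converse of the matching analysis carried out in the proof of Proposition~\ref{prop:inverse-R}. The argument divides in two. First, one must show the expected $uu$-valley is present in $P$: this is automatic since $v'$ is internal (because $\ell$'s parent has depth strictly greater than $p$), so the up step from $e$ is immediately followed by another up step. Second, one must show no closer $uu$-valley at height $h$ intervenes between $e$'s up step and $\ell$'s block; such a valley would correspond to a shallower ancestor giving the wrong label. Here the three defining conditions of decorated trees come into play: condition 3, applied within the sub-tree rooted at $v'$, forbids any leaf preceding $\ell$ there from having label below $p$, which prevents the running height of $P$ from dipping below $h$ before $\ell$'s block and blocks any earlier $uu$-valley at height $h$. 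Condition 2 guarantees the compatibility of charges needed to align $h$ in $P$ with depth $p$ in $T$, and condition 1 maintains the consistency of labels with depths. With this identification secured, the labels coincide and injectivity follows.
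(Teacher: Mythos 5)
Your strategy is genuinely different from the paper's: you aim to prove the stronger identity $\mathrm{R}([\mathrm{P}(T),\mathrm{Q}(T)])=T$, whereas the paper argues injectivity of $[\mathrm{P},\mathrm{Q}]$ directly by contradiction. The paper's proof takes the \emph{first} leaf $\ell$ (in traversal order) where $T_1$ and $T_2$ disagree, observes that every vertex charging $\ell$ in $T_1$ must also charge $\ell$ in $T_2$ (the label is smaller, and earlier leaves agree), concludes from $\mathrm{P}(T_1)=\mathrm{P}(T_2)$ that the two charging sets are equal, and then looks at the ancestor $u$ of $\ell$ at depth $k_1+1$: it fails to charge $\ell$ in $T_1$ by the label bound, so it must also fail in $T_2$, forcing an earlier descendant leaf of $u$ with a small label, which contradicts condition~3 applied to $T_1$ at the parent of $u$. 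Your route, composing with $\mathrm{R}$, is in principle admissible and would give a slightly cleaner package alongside Proposition~\ref{prop:inverse-R}, since the two compositions together directly yield mutual inverses.

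However, there is a genuine gap in your argument, precisely at the place you flag as ``the main obstacle.'' You assert that part~(a) is automatic because $v'$ is internal, so a $uu$-pair sits at the position of $e$'s up step. That only shows the shape $uu$ occurs there; it does \emph{not} show that this valley sits at the height $h$ from which the ray departs, which is the lowest point of $\ell$'s $d$-run in $P$. Height in $P$ is \emph{not} depth in $T$: it equals depth minus the net discharge of charges accumulated so far, and whether the end of $e$'s up step is at height $h$ depends on the interaction of all charges of leaves lying between $e$ and $\ell$ in the traversal. (In $Q$ the heights do track depth, but it is $P$ that the ray procedure reads.) Matching those heights is exactly the substance of the inverse statement and is where the structural work resides; invoking condition~2 as ``guaranteeing compatibility of charges'' does not supply that computation. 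The non-blocking half~(b) is likewise asserted rather than proved: condition~3 bounds labels of leaves preceding $\ell$ in the subtree, but translating that into the statement that $P$ stays strictly above $h$ on the relevant segment again requires the charge bookkeeping you have not carried out. Until the height identification is established, the argument does not go through, and the paper's direct injectivity route is both shorter and avoids this issue entirely.
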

\begin{proof}
Suppose that $T_1 \neq T_2$. Since $\mathrm{Q}(T_1) = \mathrm{Q}(T_2)$, the only difference between $T_1$ and $T_2$ must be on labels. Let $\ell$ be the first leaf in the traversal order that $T_1$ and $T_2$ differ in labels. We suppose that $\ell$ has a label $k_1$ in $T_1$ and label $k_2$ in $T_2$, with $k_1 > k_2 \geq -1$. We have $k_1 \geq 0$. It is clear that all nodes charging $\ell$ in $T_1$ also charge $\ell$ in $T_2$. Since $\mathrm{P}(T_1) = \mathrm{P}(T_2)$, we know that $\ell$ receives the same number of charges in $T_1$ and $T_2$, thus $\ell$ is also charged by the same set of vertices in $T_1$ and $T_2$. Let $u$ be the ancestor of $\ell$ of depth $k_1 + 1$ in $T_1$ and $T_2$. The vertex $u$ has a parent since $k_1 \geq 0$. The existence of $u$ is guaranteed by the first condition of decorated trees. In $T_1$, by construction, $u$ does not charge $\ell$, therefore $u$ should not charge $\ell$ in $T_2$ either. Therefore, in $T_2$ there must be a descendant leaf $\ell'$ of $u$ that has a label at most $k_2$ and comes before $\ell$, to prevent $u$ from charging $\ell$. By the minimality of $\ell$, we know that $\ell'$ also has a label $k_2 < k_1$ in $T_1$, violating the third condition of decorated trees on the parent of $u$, which is impossible. Therefore, $T_1 = T_2$.
\end{proof}

We now prove that $\mathrm{R}$ is a bijection between decorated trees and synchronized intervals.

\begin{thm}
The transformation $\mathrm{R}$ is a bijection from $\mathcal{I}_n$ to $\mathcal{T}_n$ for all $n \geq 1$, with $[\mathrm{P}, \mathrm{Q}]$ its inverse.
\end{thm}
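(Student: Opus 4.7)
The plan is to assemble the theorem directly from the four propositions already established, essentially by a standard injectivity-plus-one-sided-inverse argument. Propositions \ref{prop:PQ-conform} and \ref{prop:R-conform} tell us that $[\mathrm{P},\mathrm{Q}]$ sends $\mathcal{T}_n$ into $\mathcal{I}_n$ and that $\mathrm{R}$ sends $\mathcal{I}_n$ into $\mathcal{T}_n$, so both maps are well-defined in the appropriate domains and codomains; there is nothing more to check on that front.

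Next, I would invoke Proposition \ref{prop:inverse-R}, which provides one half of the inverse relation, namely $[\mathrm{P},\mathrm{Q}] \circ \mathrm{R} = \mathrm{id}_{\mathcal{I}_n}$. This already shows that $\mathrm{R}$ is injective and that $[\mathrm{P},\mathrm{Q}]$ is surjective. The remaining step is to establish the other composition $\mathrm{R} \circ [\mathrm{P},\mathrm{Q}] = \mathrm{id}_{\mathcal{T}_n}$.

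For this, let $T \in \mathcal{T}_n$ be arbitrary and set $T' = \mathrm{R}([\mathrm{P}(T),\mathrm{Q}(T)])$; by Proposition \ref{prop:R-conform}, $T' \in \mathcal{T}_n$. Applying $[\mathrm{P},\mathrm{Q}]$ to $T'$ and invoking Proposition \ref{prop:inverse-R} yields $[\mathrm{P}(T'),\mathrm{Q}(T')] = [\mathrm{P}(T),\mathrm{Q}(T)]$. By the injectivity of $[\mathrm{P},\mathrm{Q}]$ established in Proposition \ref{prop:PQ-injective}, we conclude $T' = T$, which is precisely the missing identity. Combining both compositions gives the bijectivity of $\mathrm{R}$ with inverse $[\mathrm{P},\mathrm{Q}]$.

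There is no real obstacle here: all the combinatorial content (well-definedness, the charge-counting verification that recovers $P$ from $\mathrm{R}([P,Q])$, and the uniqueness of labels forced by the third condition of decorated trees) has been absorbed into the preceding propositions, and the theorem itself reduces to a short diagram-chasing argument.
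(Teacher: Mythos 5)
Your proof is correct and uses exactly the same ingredients as the paper (Propositions \ref{prop:PQ-conform}, \ref{prop:R-conform}, \ref{prop:inverse-R}, \ref{prop:PQ-injective}); the only difference is that you explicitly derive $\mathrm{R}\circ[\mathrm{P},\mathrm{Q}]=\mathrm{id}_{\mathcal{T}_n}$ from the one-sided identity plus injectivity, whereas the paper simply observes that both maps are injective between finite sets and concludes bijectivity. Your version is marginally cleaner since it avoids an implicit appeal to finiteness, but it is the same argument in substance.
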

\begin{proof}
It is clear that $\mathrm{R}$ preserves the size $n$. By Proposition~\ref{prop:PQ-conform} and Proposition~\ref{prop:R-conform}, we have $[\mathrm{P},\mathrm{Q}](\mathcal{T}_n) \subset \mathcal{I}_n$ and $\mathrm{R}(\mathcal{I}_n) \subset \mathcal{T}_n$. By Proposition~\ref{prop:inverse-R} and Proposition \ref{prop:PQ-injective}, both transformations are injective, therefore they are bijections between $\mathcal{I}_n$ and $\mathcal{T}_n$, and they are the inverse of each other.
\end{proof}

By composing the two bijections $\mathrm{T}$ and $[P,Q]$, we obtain a natural bijection from non-separable planar maps with $n+1$ edges to synchronized intervals of size $n$ via decorated trees. By the equivalence between synchronized intervals and canopy intervals described in Section \ref{sec:def-interval} (see also \cite{PRV2014extension}), we then have a bijection between non-separable planar maps and canopy intervals. Therefore, these two kinds of objects are enumerated by the same formula. Figure~\ref{fig:full-bij} shows an example of how this chain of bijections gets from generalized Tamari intervals to non-separable planar maps.

\begin{figure} \label{fig:full-bij}
\centering
\includegraphics[page=16,scale=0.9]{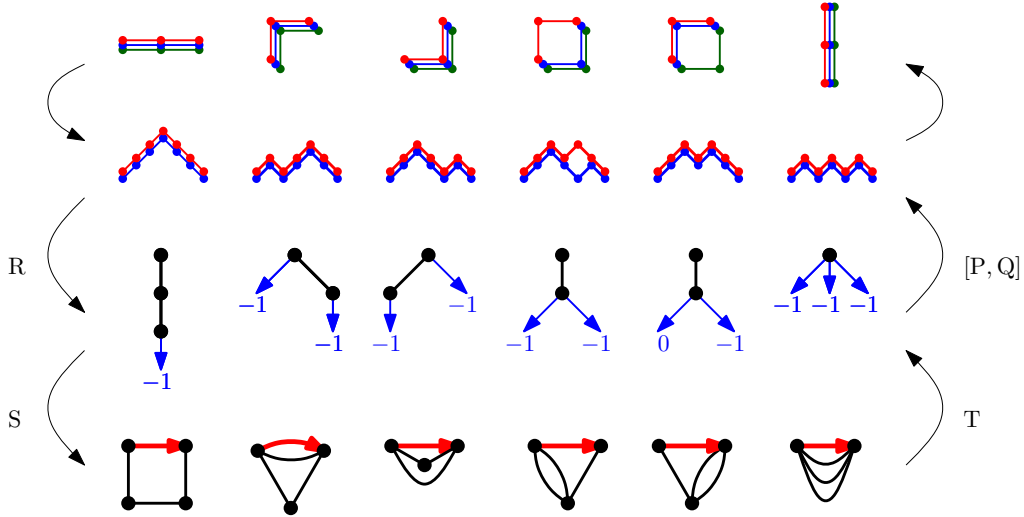}
\caption{The series of bijections from generalized Tamari intervals of length $2$ to non-separable planar maps with $4$ edges}
\end{figure}

Our bijection has further structural implications on the generalized Tamari lattice and non-separable maps, which will be the subject of an up-coming follow-up article. We also have the following remark.

\begin{rmk}
There is a version of parallel decomposition  of non-separable planar maps that just removes one non-separable component in the rightmost part of Figure~\ref{fig:map-decomp}. This decomposition is isomorphic to the one underlying Proposition~\ref{prop:rec-construct} for synchronized intervals. There thus exists a ``canonical'', recursively defined bijection between $\mathcal{M}_n$ and $\mathcal{I}_n$. We can prove that it coincides with our bijection $[P,Q] \circ \mathrm{T}$.
\end{rmk}

\section{Discussion}

Our bijection allows us not only to enumerate canopy intervals, but also to investigate various statistics in these two kinds of objects, both coming with interesting hidden symmetries. It is also interesting to ask for other significant statistics that are also transferred by our bijection and other interesting natural involutions.

We know that \tam{(NE)^n} is the usual Tamari lattice, and in \cite{BB2009intervals}, Bernardi and Bonichon gave a bijection between Tamari intervals and planar rooted triangulations. It is thus natural to look for a similar bijection as a specialization of our bijection, and eventually a generalization to \tam{(NE^m)^n}, which is isomorphic to the $m$-Tamari lattice. In general, searching for a bijection between intervals in the $m$-Tamari lattice and a natural class of planar maps is of particular interest, as it would provide an insight on the reason that the enumeration formulas of intervals in the $m$-Tamari lattice and their recursive decompositions are similar to those of various types of planar maps. For now, there doesn't exist such a natural class of planar maps for $m>1$. In the case $m=1$, we can prove bijectively the enumeration formula of unlabeled intervals by first relating them bijectively to rooted triangulations as in \cite{BB2009intervals}, which can then be enumerated bijectively as in \cite{triang-bij}. However, no such bijective proof exists for the formula of their labeled variant (\textit{cf.} \cite{BMCPR2013representation}). There is hope that discovering a natural class of planar maps for arbitrary $m$ would permit the use of bijective methods in maps (\textit{cf.} \cite{Schaeffer:survey}) to construct a (hopefully uniform) bijective interpretation of the formulas for unlabeled and labeled intervals (and more generally for the character formulas, \textit{cf.} \cite{BMCPR2013representation}). In another direction, using trees with blossoms to encode planar maps bijectively is a common practice, but we rarely see a DFS tree in this interplay. Triangulations may be a step towards the extension of this new approach.

We recall that we have used the intermediate structure of ``trees with charges on leaves'' in the bijection $[\mathrm{P},\mathrm{Q}]$ from decorated trees to synchronized intervals. As pointed out by an anonymous referee, this structure is in fact in bijection with the so-called \emph{closed flows} on trees, first proposed in \cite{tree-flow} by Chapoton for research in dendriform algebra. More precisely, in the language of \cite{chapoton-chatel-pons}, given a rooted tree $T$, decorated trees obtained by adding labels of leaves of $T$ are in bijection with closed flows on $T$ where each internal node (except the root) is given an input $-1$. This bijection is presented in our work: given a decorated tree, we first compute the charges of each leaf, then construct the corresponding closed flow by regarding charges as input on leaves and adding an input of $-1$ on each internal node other than the root vertex. This bijection is reminiscent to the one between Tamari intervals and forest flows discovered in \cite{chapoton-chatel-pons}. 

As a final remark, in Chapter 2 of \cite{kitaev2011pattern}, there is a sequence of bijections that starts with 2-stacks sortable permutations and ends with non-separable planar maps, which goes through 8 intermediate families of objects. It would be interesting to see how our work is related to these equi-enumerated objects.

\section*{Acknowledgement}

The two authors would like to thank Mireille Bousquet-M\'elou, Guillaume Chapuy, \'Eric Fusy, Luc Lapointe and Xavier Viennot for many fruitful discussions. We would also like to thank the anonymous referees for their comments and for pointing out the bijective relation between decorated trees and closed flows on trees.

\nocite{*}
\bibliographystyle{alpha}
\bibliography{tamari-carte}
\label{sec:biblio}

\end{document}